\newcommand{\absa}[1]{\left|#1\right|}
\newcommand{\absb}[1]{\lVert#1\rVert}
\newcommand{\norm}[1]{\left\lVert#1\right\rVert}
\newcommand{\rmd}{\mathrm{d}}  
\newtheorem{theorem}{Theorem}[section]
\newtheorem{corollary}[theorem]{Corollary}
\newtheorem{lemma}[theorem]{Lemma}
\newtheorem{proposition}[theorem]{Proposition}
\theoremstyle{definition}
\newtheorem{definition}[theorem]{Definition}
\title{Decay properties of Riesz transforms and steerable wavelets  \thanks{This research was funded in part by ERC Grant ERC-2010-AdG 267439-FUN-SP and the Swiss National Science Foundation under grant PBELP2-135867.}}
\author{
John Paul Ward \thanks{Biomedical Imaging Group, \'Ecole polytechnique f\'ed\'erale de Lausanne (EPFL),
Station 17, CH-1015, Lausanne, Switzerland  ({\tt john.ward@epfl.ch}).}
\and  
Kunal Narayan Chaudhury \thanks{Program in Applied and Computational Mathematics, Princeton University, USA ({\tt kchaudhu@math.princeton.edu})}   
\and
Michael Unser \thanks{Biomedical Imaging Group, \'Ecole polytechnique f\'ed\'erale de Lausanne (EPFL),
Station 17, CH-1015, Lausanne, Switzerland  ({\tt michael.unser@epfl.ch}).}
 }
\date{}
\begin{document}

\maketitle

\begin{abstract}
The Riesz transform is a natural multi-dimensional extension of the Hilbert transform, and it has been the object of study for many years due to its nice mathematical properties.  More recently, the Riesz transform and its variants have been used to construct complex wavelets and steerable wavelet frames in higher dimensions. The flip side of this approach, however, 
is that the Riesz transform of a wavelet often has slow decay. One can nevertheless overcome this problem by requiring the original wavelet to have sufficient smoothness, decay, and vanishing moments. In this paper, we derive necessary conditions in terms of these three properties that guarantee the decay of the Riesz transform and its variants, and as an application, we show how the decay of the popular Simoncelli wavelets can be improved by appropriately modifying their Fourier transforms. By applying the Riesz transform to these new wavelets, we obtain steerable frames with rapid decay.  
\end{abstract}

%\begin{keywords}
%Riesz transform, Steerable wavelets, Singular integrals
%\end{keywords}

%\begin{AMS}
%32A55, 42C40
%\end{AMS}

\section{Introduction}
The Riesz transform of a real-valued function $f(\bm{x})$ on $\mathbf{R}^d$ is the vector-valued function $\mathcal{R} f(\bm{x})= \left( \mathcal{R}_1f(\bm{x}), \ldots, \mathcal{R}_df(\bm{x}) \right)$ given by
\begin{equation}
\label{defn}
\mathcal{R}_if(\bm{x})=C_d \lim_{\epsilon\rightarrow 0} \int_{\absb{\bm{x}-\bm{y}}>\epsilon}f(\bm{y})\frac{x_i-y_i}{\absb{\bm{x}-\bm{y}}^{d+1}}\rmd\bm{y},
\end{equation}
where $C_d$ is a absolute constant depending on the dimension $d$ \cite{Duoandikoetxea}.  In this formula, we can see that the Riesz transform is defined by a principal value integral, as is the case for the Hilbert transform. Roughly speaking, the $i$-th component of the Riesz transform 
is obtained through the convolution of $f(\bm{x})$ with the kernel $x_i/\absb{\bm{x}}^{d+1}$, where $\absb{\bm{x}}$ is the Euclidean norm of $\bm{x}$, and $x_i$ are its coordinates.  The truncation of the integral in \eqref{defn} is used to contain the singularity of the kernel at the origin, 
by systematically ``chopping'' it off around the origin. This added detail makes the analysis of the transform quite involved.

Though the transform appears arcane at first sight, it tends to emerge naturally in various physical and mathematical settings due to its fundamental nature. It is essentially the unique
scalar-to-vector valued transform that is unitary (conserves energy), and is simultaneously invariant to the fundamental transformations of translation, 
scaling, and rotation. For example, the role of the Riesz transform in the theory of differential equations and 
multidimensional Fourier analysis has long been recognized in mathematics, where the above-mentioned invariances play a central role, see, e.g., \cite{Duoandikoetxea,Stein,Stein2}.  
The invariance of the transform to translations, namely that $\mathcal{R} [f(\cdot-\boldsymbol{u})](\bm{x})=[\mathcal{R} f](\bm{x}-\boldsymbol{u})$, is immediately seen from \eqref{defn}. 
On the other hand, its invariance to scaling is confirmed by the change of variable $\bm{x} \mapsto s \bm{x}$ in \eqref{defn}, where $s>0$.
The other properties are, however, not obvious at first sight. Instead, these are better seen from its Fourier transform, which is given by (see, e.g., \cite{Stein})
\begin{equation}
\label{fourier}
\widehat{\mathcal{R}_if}(\bm{\omega}) = -j \frac{\omega_i}{\absb{\bm{\omega}}}\widehat{f}(\bm{\omega}).
\end{equation}
From this and Parseval's theorem, we see the sum of the squared $\mathbf{L}^2$-norms of $\mathcal{R}_if(\bm{x})$ equals the squared $\mathbf{L}^2$-norm of $f(\bm{x})$. 
That is, the energy of $f(\bm{x})$ is divided between the different Riesz components. The Fourier expression also tells us that $f(\bm{x})$ can be recovered from its Riesz transform using a simple reconstruction, 
namely as
\begin{equation*}
f(\bm{x}) = -\sum_{i=1}^d \mathcal{R}_i [\mathcal{R}_if(\bm{x})].
\end{equation*}
Using the invariance of the Fourier transform with rotations, it can also be verified from \eqref{fourier} that the Riesz transform of the rotation of $f(\bm{x})$ is the (vector-field) rotation of $\mathcal{R} f(\bm{x})$.

The Riesz transform is a natural multidimensional extension of the Hilbert transform, obtained by setting $d=1$ in \eqref{defn}. The fact that 
the Hilbert transform plays a central role in determining the (complex) analytic counterpart of real-valued signals, and, particularly, in determining its instantaneous phase and frequency, has long been known 
in the signal processing and wavelet community, e.g., see \cite{Abry, Daubechies,Hahn,Olson}. The introduction of the Riesz transform to the community, 
however, has been more recent. In their pioneering work \cite{FelsbergSommer1,FelsbergSommer2}, Felsberg and Sommer used the transform to define a two dimensional generalization of the
analytic signal, which they called the \textit{monogenic signal}. At around this time,  Larkin et al. introduced a complexified version of the Riesz transform 
in two dimensions \cite{FelsbergLarkin1,FelsbergLarkin2}. They showed that this complex transform could be used for demodulating interferrograms, and for analyzing fringe patters in optics. Felsberg's work on the monogenic signal triggered a series of research works relating to the 
applications of the Riesz transform in the design of wavelets. For example, this inspired Metikas and Olhede to realize a monogenic version of the continuous wavelet 
transform in \cite{Metikas}. Further work, concerning filters based on Riesz transforms, can be found in \cite{Felsberg,Knutsson,Kothe}.

More recently, in a series of papers by Unser et al. \cite{Unser3,Unser1,Unser2}, it was shown how certain functional properties of 
the transform, particularly its invariances to translations, scaling and rotations, and its connection with derivatives and the Laplacian, could be used for realizing complex (monogenic) wavelet bases 
and \textit{steerable} wavelet frames in higher dimensions. This provided a natural functional counterpart to Simoncelli's steerable filterbanks \cite{Simoncelli}, which is a popular framework for decomposing images at
multiple scales and orientations. The key connection provided by Unser et al. was that the steerable wavelets of Simoncelli could be realized as the Riesz transform of appropriate bandpass functions. 
This powerful functional interpretation allowed them to enrich the wavelet design along several directions. For example, the authors proposed the so-called higher-order Riesz transforms obtained through
repeated applications of the various components of the Riesz transforms. They showed these higher-order transforms could be applied to a given wavelet to obtain a larger palette of steerable wavelets. When applied on the data, 
these frames act as (multi-scale) partial derivatives. The important point was that all of this could be done using fast and stable filterbank algorithms.

An aspect of this theory that has not previously been studied in depth is how the Riesz transform affects the decay of a wavelet.   
The importance of this issue lies in the fact that a slowly decaying wavelet can produce poor approximations that suffer from problems such as ringing artifacts and boundary effects.  Here, we propose that good decay of the Riesz transform $\mathcal{R} f(\bm{x})$ is determined by the smoothness, decay, and vanishing moments of $f(\bm{x})$.  In fact, this observation explains the poor spatial decay of some of the earlier wavelet constructions involving the Riesz transform. For example, Felsberg applied the transform on the Poisson kernel which has no vanishing moments \cite{FelsbergSommer1,FelsbergSommer2}, and Simoncelli used it on wavelets with poor spatial decay (due to the non-smooth cut-off in the spectrum) \cite{Simoncelli}.

In this paper, we perform a quantitative study of the decay property of the Riesz transform and its higher-order variants. We expand upon the work in \cite{Chaudhury}, 
where the intimate connection between the Hilbert transform (the one-dimensional Riesz transform) and wavelets was investigated. It was observed there that for a wavelet with large number of vanishing moments
 and a reasonably good decay, its Hilbert transform automatically has an ultra-fast decay. We extend the results in \cite{Chaudhury} first to the Riesz transform, and then to its higher-order counterparts. 
While the main principles are the same, we are required to use a different approach in the higher dimensional setting. The reason for this is that when $d=1$, the Riesz multiplier has the particularly simple form $1/\pi x$, which
 does not involve the term $\absa{x}$. Also, the concept of higher-order transforms is trivial for $d=1$, since by applying the Hilbert transform twice, we get back the negative of the original wavelet. 
The higher-order transforms are thus of significance only for $d \geq 2$. The analysis of these transforms presents additional technical challenges due to the asymmetry between the decay of the original wavelet and
its Riesz transform. As a consequence, the results for the higher-order transforms cannot simply be obtained by a direct application of the results for a single Riesz transform.

\section{Notation}

We will follow the notations in \cite{Unser3,Unser1,Unser2}. The components of the higher-order Riesz transforms are obtained by iterated applications of the component
transforms.  Thus, given a multi-index of non-negative integers  $\bm{\alpha}=(\alpha_1,\dots,\alpha_d)$, we define the higher-order Riesz transform by

\begin{equation*}
 \mathcal{R}^{\bm{\alpha}}f(\bm{x}) = \sqrt{\frac{\absa{\bm{\alpha}}!}{\bm{\alpha}!}}\mathcal{R}_1^{\alpha_1}\cdots \mathcal{R}_d^{\alpha_d}f(\bm{x}),
\end{equation*}
where $\mathcal{R}_i^k$ denotes the $k$-fold application of $\mathcal{R}_i$ to $f(\bm{x})$, and $\absa{\bm{\alpha}}$ is the order of the transform.  Here we are using the the notation $\absa{\bm{\alpha}}=\alpha_1+\cdots+\alpha_d$ and $\bm{\alpha}!=\alpha_1 !\cdots\alpha_d!$.
Also, we will write the partial derivative $D^{\bm{\alpha}}f(\bm{x})$ as

\begin{equation*}
D^{\bm{\alpha}} f (\bm{x})=\frac{\partial^{\absa{\bm{\alpha}}}}{\partial_{x_1}^{\alpha_1}\cdots\partial_{x_d}^{\alpha_d}} f (\bm{x}). 
\end{equation*}
for a given multi-index $\bm{\alpha}=(\alpha_1,\dots,\alpha_d)$. The inner-product between two vectors $\bm{x}$ and $\bm{y}$ will be denoted by $\bm{x}\cdot \bm{y}$, and $\bm{x}^{\bm{\alpha}}$ will represent the monomial $x_1^{\alpha_1} \cdots x_d^{\alpha_d}$.

 It is clear that the various components of $\mathcal{R} f(\bm{x})$ have similar analytic properties. We therefore fix $i$ once and for all and 
define the Riesz kernel $R(\bm{y})=y_i/\absb{\bm{y}}^{d+1}$. Our goal in this paper is to study the Riesz transforms of well-behaved functions,
with a certain number of vanishing moments. By well-behaved, we mean that they should be sufficiently smooth and must have enough decay. To this end, we define the class $L_N(\mathbf{R}^d)$ to be the collection of differentiable functions $f(\bm{x})$ such that

\begin{enumerate}
\item  $\absa{f(\bm{x})} \leq C(1+\absb{\bm{x}})^{-d-N+\epsilon}$,
\item  $\absa{ D^{\bm{\alpha}} f(\bm{x})}\leq C (1+\absb{\bm{x}})^{-d-N-1+\epsilon}, \ \absa{\bm{\alpha}}=1$, \text{ and}
\item  $\int \bm{x}^{\bm{\beta}} f(\bm{x})\rmd\bm{x}=0, \ \absa{\bm{\beta}} < N$.
\end{enumerate}
Here $C>0$ and $0 \leq \epsilon <1$ are arbitrary constants, and the class index $N \geq 1$ is an integer.  The first two properties address decay and smoothness, while the third condition guarantees vanishing moments.

\section{Riesz transform of wavelets in $L_N(\mathbf{R}^d)$} 

This section comprises the heart of the paper, as we establish results showing how the component Riesz transforms preserve decay and vanishing moments for functions of the above class. In particular, we will show that the transform $\mathcal{R}_if$ of a function  $f\in L_N(\mathbf{R}^d)$ has a rate of decay comparable to $f$ and has the same number of vanishing moments.  Significantly, these theorems are formulated in a way that allows then to be applied iteratively so that we may derive results for the higher-order transforms in the next section. However, as the transform is given in terms of a singular integral, before estimating its decay, we need to show that it is well-defined.

\begin{proposition}\label{pr:well_def}
For all $f \in L_N(\mathbf{R}^d)$, $\mathcal{R}_if$ is well-defined, continuous, and bounded by 
\begin{equation*}
C\left(\sup_{\bm{t}\in\mathbf{R}^d}\absb{\nabla f(\bm{t})}+\norm{f}_1\right).
\end{equation*}
for some $C>0$.
\end{proposition}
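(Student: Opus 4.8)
The plan is to tame the singular kernel by the standard decomposition into a singular near-field and an integrable far-field. First I would make the change of variables $\bm{z}=\bm{x}-\bm{y}$, rewriting the truncated integral as
\[
\int_{\absb{\bm{z}}>\epsilon} f(\bm{x}-\bm{z})\,R(\bm{z})\,\rmd\bm{z}, \qquad R(\bm{z})=\frac{z_i}{\absb{\bm{z}}^{d+1}},
\]
and then split the domain into the near region $\epsilon<\absb{\bm{z}}\le 1$ and the far region $\absb{\bm{z}}>1$. These two regions will produce, respectively, the $\sup_{\bm{t}}\absb{\nabla f(\bm{t})}$ and $\norm{f}_1$ terms in the claimed bound. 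For the far region, I would use that $\absa{R(\bm{z})}\le\absb{\bm{z}}^{-d}\le 1$ (since $\absa{z_i}\le\absb{\bm{z}}$ and $\absb{\bm{z}}\ge 1$), so the far integral is dominated by $\int_{\absb{\bm{z}}>1}\absa{f(\bm{x}-\bm{z})}\,\rmd\bm{z}\le\norm{f}_1$, uniformly in $\bm{x}$ and $\epsilon$. Here I would first remark that condition (1) with $N\ge 1$ and $\epsilon<1$ forces $f\in\mathbf{L}^1(\mathbf{R}^d)$, so this quantity is finite.

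The main obstacle is the near region, where $R$ fails to be integrable. The key is to exploit the oddness of the Riesz kernel: because $R(-\bm{z})=-R(\bm{z})$ and the annulus $\{\epsilon<\absb{\bm{z}}\le 1\}$ is symmetric under $\bm{z}\mapsto-\bm{z}$, we have $\int_{\epsilon<\absb{\bm{z}}\le 1}R(\bm{z})\,\rmd\bm{z}=0$. This lets me subtract the constant $f(\bm{x})$ and rewrite the near integral as
\[
\int_{\epsilon<\absb{\bm{z}}\le 1}\bigl[f(\bm{x}-\bm{z})-f(\bm{x})\bigr]R(\bm{z})\,\rmd\bm{z}.
\]
The mean value theorem then gives $\absa{f(\bm{x}-\bm{z})-f(\bm{x})}\le\absb{\bm{z}}\,\sup_{\bm{t}}\absb{\nabla f(\bm{t})}$, so the integrand is bounded by $\absb{\bm{z}}^{1-d}\sup_{\bm{t}}\absb{\nabla f(\bm{t})}$, which is integrable on the unit ball (in polar coordinates the radial factor is $r^{1-d}\cdot r^{d-1}=1$). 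By dominated convergence the principal-value limit as $\epsilon\to 0$ therefore exists, and the near integral is bounded by $C\sup_{\bm{t}}\absb{\nabla f(\bm{t})}$. Combining the two regions establishes that $\mathcal{R}_if$ is well-defined and yields the stated bound.

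Finally, for continuity I would observe that, after passing to the limit, both pieces are absolutely convergent integrals depending on the parameter $\bm{x}$. The near piece is continuous by dominated convergence, using the $\epsilon$-free majorant $\absb{\bm{z}}^{1-d}\sup_{\bm{t}}\absb{\nabla f(\bm{t})}$ together with the continuity of $f$. For the far piece I would restrict $\bm{x}$ to a bounded neighborhood of an arbitrary point $\bm{x}_0$ and use the polynomial decay in condition (1) together with $\absb{\bm{x}-\bm{z}}\ge\absb{\bm{z}}-\absb{\bm{x}}$ to produce a locally uniform integrable majorant of the form $C'(1+\absb{\bm{z}})^{-d-N+\epsilon}$, again invoking dominated convergence. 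I expect the genuinely delicate points to be the cancellation step and the justification of the $\epsilon\to 0$ limit; the far-field estimate and the continuity argument are then routine once the decay of $f$ is brought to bear.
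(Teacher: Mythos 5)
Your proposal is correct, and for the well-definedness and the bound it coincides with the paper's argument: the same split of the domain at radius $1$, the same use of the oddness of the kernel to subtract $f(\bm{x})$ on the near region, the mean value theorem giving the majorant $\absb{\bm{z}}^{1-d}\sup_{\bm{t}}\absb{\nabla f(\bm{t})}$, dominated convergence for the principal-value limit, and the trivial bound $\absa{R(\bm{z})}\leq 1$ on the far region yielding $\norm{f}_1$ (your explicit remark that condition (1) with $N\geq 1$, $\epsilon<1$ gives $f\in\mathbf{L}^1$ is a small but welcome addition the paper leaves implicit). Where you genuinely diverge is the continuity step: the paper argues at the level of the truncated integrals, choosing $\epsilon'$ so that the contribution near the singularity is smaller than $\gamma$ and then invoking uniform continuity of the Riesz kernel away from its singularity to compare $\mathcal{R}_if(\bm{x})$ and $\mathcal{R}_if(\bm{z})$, which yields an explicit $\gamma$--$\delta$ estimate; you instead work directly with the limiting representation (regularized near piece plus absolutely convergent far piece) and apply dominated convergence in the parameter $\bm{x}$, using the global majorant $\absb{\bm{z}}^{1-d}\sup_{\bm{t}}\absb{\nabla f(\bm{t})}$ near the origin and a locally uniform majorant built from the decay of $f$ at infinity. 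Your route is arguably cleaner and avoids the slight bookkeeping issue in the paper of comparing truncations centered at two different points, at the cost of having to verify the locally uniform far-field majorant carefully (for $\absb{\bm{z}}$ comparable to $\absb{\bm{x}}$ you must absorb $\norm{f}_\infty$ into the constant, which is routine); the paper's version, in exchange, gives a quantitative modulus-of-continuity-style statement. Both are valid proofs of the proposition.
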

\begin{proof}
We can see that the domain of the singular integral defining $\mathcal{R}_if$ is unbounded, and we have an additional difficulty in a neighborhood of the point $\bm{x}$.  We shall apply different techniques to handle these problems, so we split the integral into two terms.  For a fixed $\bm{x}$, we have 
 
\begin{align*}
\lim_{\epsilon\rightarrow 0} \int_{\absb{\bm{x}-\bm{y}}>\epsilon}f(\bm{y})\frac{x_i-y_i}{\absb{\bm{x}-\bm{y}}^{d+1}}\rmd\bm{y} &= \lim_{\epsilon\rightarrow 0} \int_{1>\absb{\bm{x}-\bm{y}}>\epsilon}f(\bm{y})\frac{x_i-y_i}{\absb{\bm{x}-\bm{y}}^{d+1}}\rmd\bm{y}\\
&\quad + \int_{\absb{\bm{x}-\bm{y}}>1}f(\bm{y})\frac{x_i-y_i}{\absb{\bm{x}-\bm{y}}^{d+1}}\rmd\bm{y}.
\end{align*}
Using the fact that the Riesz kernel is odd and the inequality 

\begin{equation}\label{eq:mvt}
\frac{\absa{f(\bm{y})-f(\bm{x})}}{\absb{\bm{x}-\bm{y}}}\frac{\absa{x_i-y_i}}{\absb{\bm{x}-\bm{y}}}\frac{1}{\absb{\bm{x}-\bm{y}}^{d-1}} \leq \sup_{\bm{t}\in\mathbf{R}^d}\absb{\nabla f(\bm{t})}\frac{1}{\absb{\bm{x}-\bm{y}}^{d-1}},
\end{equation}
holds, we can write

\begin{equation}\label{eq:odd}
\int_{1>\absb{\bm{x}-\bm{y}}>\epsilon}f(\bm{y})\frac{x_i-y_i}{\absb{\bm{x}-\bm{y}}^{d+1}}\rmd\bm{y}=\int_{1>\absb{\bm{x}-\bm{y}}>\epsilon}(f(\bm{y})-f(\bm{x}))\frac{x_i-y_i}{\absb{\bm{x}-\bm{y}}^{d+1}}\rmd\bm{y}
\end{equation}
and apply the dominated convergence theorem, which shows that the first term is convergent. This also shows that the first term is bounded by $C\sup_{\bm{t}\in\mathbf{R}^d}\absb{\nabla f(\bm{t})}$.  For the second term, we have

\begin{equation*}
\frac{\absa{x_i-y_i}}{\absb{\bm{x}-\bm{y}}^{d+1}}\leq 1,
\end{equation*}
which implies it is bounded by $\norm{f}_1$.

To finish the proof, we must show that $\mathcal{R}_if$ is continuous, so we fix $\bm{x}\in\mathbf{R}^d$ and $\gamma>0$. 
Continuity can then be proved by showing that there are finite constants $C>0$ and $\delta>0$ such that $|\mathcal{R}_if(\bm{x})-\mathcal{R}_if(\bm{z})|<C\gamma$ 
for $\absb{\bm{x}-\bm{z}}<\delta$.  To that end, consider the transform of $f$  written as 

\begin{align*}
  \mathcal{R}_if(\bm{x}) &= \lim_{\epsilon\rightarrow 0} \int_{\epsilon'>\absb{\bm{x}-\bm{y}}>\epsilon}f(\bm{y})\frac{x_i-y_i}{\absb{\bm{x}-\bm{y}}^{d+1}}\rmd\bm{y}\\
  &\quad+
 \int_{\absb{\bm{x}-\bm{y}}>\epsilon'}f(\bm{y})\frac{x_i-y_i}{\absb{\bm{x}-\bm{y}}^{d+1}}\rmd\bm{y}
\end{align*}
for $\epsilon'>\epsilon>0$.  Using the same techniques as in equations \eqref{eq:mvt} and \eqref{eq:odd}, we can see that $\epsilon'$ should be chosen so that the absolute value of the first integral is smaller than $\gamma$.  Therefore 

\begin{equation*}
 \absa{\mathcal{R}_if(\bm{x})-\mathcal{R}_if(\bm{z})} \leq  2\gamma + \int_{\absb{\bm{x}-\bm{y}}>\epsilon'}  \absa{f(\bm{y})}\left|\frac{x_i-y_i}{\absb{\bm{x}-\bm{y}}^{d+1}}-\frac{z_i-y_i}{\absb{\bm{z}-\bm{y}}^{d+1}}\right|\rmd\bm{y},
\end{equation*}
and since the Riesz kernel is uniformly continuous\footnote{This is because it is differentiable away from the singularity, and its gradient is bounded.} away from its singularity, there is some $\delta>0$ (depending on $\epsilon'$) for which 

\begin{align*}
 \absa{\mathcal{R}_if(\bm{x})-\mathcal{R}_if(\bm{z})}&\leq \gamma\left(2 +  \int_{\absb{\bm{x}-\bm{y}}>\epsilon'}\absa{f(\bm{y})}\rmd\bm{y}\right)\\
 &\leq \gamma\left(2 +\norm{f}_1\right)\\
\end{align*}
when $\absb{\bm{x}-\bm{z}}<\delta$.
\end{proof}

Now that we have established the fact that the transform is well-defined,  our focus is shifted to the main result concerning decay. Our approach is similar to the one used by Adams in \cite{Adams} in order to bound weighted $L^p$ norms of Riesz transforms, and as in that paper, a key property is the number of vanishing moments.   As there are different issues with the singular integral depending on the region of $\mathbf{R}^d$,  we divide it into three pieces and bound each separately.  One of these regions will be handled by using the vanishing moment condition on $f$ and approximating the shifted version of the Riesz kernel $R(\bm{x}-\cdot)$ by its 
Taylor expansion at $\bm{0}$. Therefore, for a fixed $\bm{x}$, we denote the degree $N-1$ Taylor polynomial approximant to $R(\bm{x}-\cdot)$ by $P_N(\bm{x},\cdot)$ and define $\Phi_N(\bm{x},\bm{y}):=R(\bm{x}-\bm{y})-P_N(\bm{x},\bm{y})$.  With this notation, we have 

\begin{equation*}
P_N(\bm{x},\bm{y}) + \Phi_N(\bm{x},\bm{y}) = \sum_{\absa{\bm{\alpha}}\leq N-1} \frac{1}{\bm{\alpha} !}D^{\bm{\alpha}} R(\bm{x})\bm{y}^{\bm{\alpha}} + \sum_{\absa{\bm{\alpha}}= N} \frac{1}{\bm{\alpha} !} D^{\bm{\alpha}} R(\bm{x}-\bm{\xi})\bm{y}^{\bm{\alpha}},
\end{equation*}
where $\bm{\xi}$ is a point on the line segment from $\bm{0}$ to $\bm{y}$. Two estimates that will be used in the proof are

\begin{equation}\label{eq1}
\absa{D^{\bm{\beta}} R(\bm{x})}\leq C \absb{\bm{x}}^{-d-\absa{\bm{\beta}}}
\end{equation}
for $\bm{x}\neq \bm{0}$ and

\begin{equation}\label{eq2}
\absa{R(\bm{x}-\bm{y})-P_N(\bm{x},\bm{y})} \leq C \absb{\bm{x}}^{-(d+N)} \absb{\bm{y}}^{N}
\end{equation}
for $\absb{\bm{y}}\leq \absb{\bm{x}}/2$. The estimate {\eqref{eq1}} can be verified by differentiating the Riesz kernel $R(\bm{x})= x_i/\absb{\bm{x}}^{d+1}$, and {\eqref{eq2}} follows by applying {\eqref{eq1}} to the remainder in Taylor's theorem.

\begin{theorem}\label{th:decay}
 If $f\in L_N(\mathbf{R}^d)$, then the component transforms of $f$ 
 
\begin{equation*}
\mathcal{R}_if(\bm{x}) = C_d \lim_{\epsilon'\rightarrow 0} \int_{\absb{\bm{x}-\bm{y}}>\epsilon'}f(\bm{y})\Phi_N(\bm{x}-\bm{y})\rmd\bm{y} 
\end{equation*}
satisfy the bound

\begin{equation*}
 \absa{\mathcal{R}_if(\bm{x})} \leq C (1+\absb{\bm{x}})^{-d-N+\epsilon+\delta}
\end{equation*}
for some $0< \delta <1-\epsilon$.
\end{theorem}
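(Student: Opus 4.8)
The plan is to use the vanishing moments of $f$ to rewrite the principal-value integral in a form that can be estimated region by region, and then to split the $\bm{y}$-domain into three pieces according to the size of $\absb{\bm{y}}$ relative to $\absb{\bm{x}}$. First I would observe that, since $P_N(\bm{x},\cdot)$ is a polynomial of degree $N-1$ in $\bm{y}$ and $f$ annihilates all moments of order $<N$, the product $f(\bm{y})P_N(\bm{x},\bm{y})$ is integrable and has vanishing integral over $\mathbf{R}^d$; hence $\mathcal{R}_if(\bm{x}) = C_d\,\mathrm{p.v.}\int f(\bm{y})\Phi_N(\bm{x},\bm{y})\,\rmd\bm{y}$, where $\Phi_N=R-P_N$ and subtracting the smooth polynomial does not disturb the principal value near $\bm{y}=\bm{x}$. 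Throughout I would assume $\absb{\bm{x}}$ large (Proposition~\ref{pr:well_def} already gives a uniform bound, which suffices on compact sets), and split into the near-origin ball $\absb{\bm{y}}\le\absb{\bm{x}}/2$, the far field $\absb{\bm{y}}\ge 2\absb{\bm{x}}$, and the middle annulus $\absb{\bm{x}}/2<\absb{\bm{y}}<2\absb{\bm{x}}$.

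On the near-origin ball the estimate \eqref{eq2} applies and gives $\absa{\Phi_N(\bm{x},\bm{y})}\le C\absb{\bm{x}}^{-d-N}\absb{\bm{y}}^N$; combining this with the decay $\absa{f(\bm{y})}\le C(1+\absb{\bm{y}})^{-d-N+\epsilon}$ and integrating in polar coordinates produces a factor $\absb{\bm{x}}^{\epsilon}$ (or $\log\absb{\bm{x}}$ when $\epsilon=0$), so this region contributes $C\absb{\bm{x}}^{-d-N+\epsilon}$ up to a logarithm. In the far field I would use $\absb{\bm{x}-\bm{y}}\ge\absb{\bm{y}}/2$, bound the singular part by $\absa{R(\bm{x}-\bm{y})}\le C\absb{\bm{y}}^{-d}$, and bound the polynomial part via \eqref{eq1} by its top-order term $C\absb{\bm{x}}^{-d-N+1}\absb{\bm{y}}^{N-1}$; the rapid decay of $f$ makes both integrals converge at infinity, and each again yields $C\absb{\bm{x}}^{-d-N+\epsilon}$.

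The middle annulus is the crux. Here $\absb{\bm{y}}\sim\absb{\bm{x}}$, so the decay bound gives $\absa{f(\bm{y})}\le C\absb{\bm{x}}^{-d-N+\epsilon}$, but the kernel is singular at $\bm{y}=\bm{x}$. I would treat the polynomial part using \eqref{eq1}, which on the annulus gives $\absa{P_N(\bm{x},\bm{y})}\le C\absb{\bm{x}}^{-d}$, so that piece is controlled by $C\absb{\bm{x}}^{-d}\int_{\mathrm{annulus}}\absa{f}\le C\absb{\bm{x}}^{-d-N+\epsilon}$. For the singular part $\mathrm{p.v.}\int f(\bm{y})R(\bm{x}-\bm{y})$ I would split once more by distance to $\bm{x}$: on $\absb{\bm{x}-\bm{y}}<1$ I subtract $f(\bm{x})$ (legitimate, since the p.v.\ of the odd kernel over the symmetric ball vanishes) and use the gradient bound, which on this small ball reads $\absb{\nabla f}\le C\absb{\bm{x}}^{-d-N-1+\epsilon}$, to get an integrable factor $\absb{\bm{x}-\bm{y}}^{1-d}$ and a contribution of order $\absb{\bm{x}}^{-d-N-1+\epsilon}$; on $\absb{\bm{x}-\bm{y}}\ge 1$ within the annulus I bound $\absa{R}\le\absb{\bm{x}-\bm{y}}^{-d}$ and integrate over $1\le\absb{\bm{x}-\bm{y}}\le 3\absb{\bm{x}}$, which produces $\int_1^{3\absb{\bm{x}}}r^{-1}\,\rmd r\sim\log\absb{\bm{x}}$.

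This last logarithm is precisely the obstacle: the non-singular part of the annulus contributes $C\absb{\bm{x}}^{-d-N+\epsilon}\log\absb{\bm{x}}$, which cannot be absorbed into $\absb{\bm{x}}^{-d-N+\epsilon}$ but is dominated by $C\absb{\bm{x}}^{-d-N+\epsilon+\delta}$ for any $\delta>0$. Collecting all pieces and choosing $\delta$ small enough that $\epsilon+\delta<1$ (so that $0<\delta<1-\epsilon$, as required for the iteration in the next section), I would obtain $\absa{\mathcal{R}_if(\bm{x})}\le C(1+\absb{\bm{x}})^{-d-N+\epsilon+\delta}$. The main difficulty thus lies entirely in the middle annulus, where the competition between the singularity of the kernel (handled by the oddness-plus-gradient cancellation) and the logarithmic growth of the annular volume integral forces the small loss $\delta$ in the decay exponent.
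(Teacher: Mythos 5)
Your proof is correct, and its core ingredients coincide with the paper's: the vanishing moments are used to subtract the Taylor polynomial $P_N(\bm{x},\cdot)$ of the kernel, the estimates \eqref{eq1}--\eqref{eq2} control the polynomial and the remainder, and the singularity at $\bm{y}=\bm{x}$ is tamed by oddness plus the gradient bound. The decomposition, however, is genuinely different. The paper splits $\mathbf{R}^d$ into $D_1=\{\absb{\bm{y}}<\absb{\bm{x}}/2\}$, the ball $D_3=B(\bm{x},\absb{\bm{x}}/2)$, and their complement $D_2$, and performs the cancellation $f(\bm{y})-f(\bm{x})$ over all of $D_3$; since $\sup_{\bm{t}\in B(\bm{x},\absb{\bm{x}}/2)}\absb{\nabla f(\bm{t})}\le C(1+\absb{\bm{x}})^{-d-N-1+\epsilon}$ and $\int_{D_3}\absb{\bm{x}-\bm{y}}^{1-d}\rmd\bm{y}\le C\absb{\bm{x}}$, no logarithm ever appears there; the loss $\delta$ enters instead through the weighted norm $\norm{\absb{\cdot}^{N-\epsilon-\delta}f(\cdot)}_1$, which is finite only for $\delta>0$. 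You split instead into a near-origin ball, a far field $\absb{\bm{y}}\ge 2\absb{\bm{x}}$, and a middle annulus, and perform the cancellation only on the unit ball $B(\bm{x},1)$, paying a factor $\log\absb{\bm{x}}$ on the remainder of the annulus, which you then absorb into $\absb{\bm{x}}^{\delta}$. Both routes give the stated bound for every $0<\delta<1-\epsilon$: yours makes the origin of the $\delta$-loss explicit as a logarithm (and is marginally sharper away from the singularity, where you obtain $\absb{\bm{x}}^{-d-N+\epsilon}$ with no $\delta$), while the paper's half-radius cancellation shows that the singular region contributes only $C(1+\absb{\bm{x}})^{-d-N+\epsilon}$ and concentrates the loss in the integrability of the weighted norm. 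Your preliminary remark that $f(\bm{y})P_N(\bm{x},\bm{y})$ is absolutely integrable with vanishing integral, which justifies replacing $R(\bm{x}-\cdot)$ by $\Phi_N(\bm{x},\cdot)$ inside the principal value, is a step the paper leaves implicit in the statement of the theorem, so spelling it out is a small but genuine improvement.
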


\begin{proof}
The upper bound was proved in Proposition \ref{pr:well_def}, so it suffices to verify that $\mathcal{R}_if(\bm{x})$ has sufficient decay for $\absb{\bm{x}}$ large.  In order to prove the bound, we divide the domain of the integral defining $\mathcal{R}_if$ into the regions

\begin{align*} 
D_1 &= \{\absb{\bm{y}}<\absb{\bm{x}}/2\}, \\
D_2 &= \{\absb{\bm{y}-\bm{x}} \geq \absb{\bm{x}}/2, \absb{\bm{y}} \geq \absb{\bm{x}}/2\}, \\
D_3 &= \{\absb{\bm{y}-\bm{x}}<\absb{\bm{x}}/2\}.
\end{align*}
The regions $D_1$ and $D_3$ are disjoint open balls centered at the origin and $\bm{x}$ respectively, and $D_2$ is the complement of their union.  Since we assume $\absb{\bm{x}}>0$, these sets form a partition of $\mathbf{R}^d$.  First, we consider the region $D_1$.  Using equation \eqref{eq2} we get

\begin{equation*}
\int_{D_1}\absa{f(\bm{y})\Phi_N(\bm{x},\bm{y})}\rmd\bm{y}  \leq C\int_{D_1}\absa{f(\bm{y})}\absb{\bm{y}}^{N}\absb{\bm{x}}^{-d-N}\rmd\bm{y}, \\
\end{equation*}
and since $2<\absb{\bm{x}}/\absb{\bm{y}}$, the inequality

\begin{equation*}
\int_{D_1}\absa{f(\bm{y})\Phi_N(\bm{x},\bm{y})}\rmd\bm{y} \leq C\absb{\bm{x}}^{-d-N+\epsilon+\delta}\norm{\absb{\cdot}^{N-\epsilon-\delta}f(\cdot)}_1
\end{equation*}
holds for any positive $\delta$. Furthermore, due to the decay of $f$, the norm in the right hand side of the above equation is finite.

Next, consider the region $D_2$, where we may apply equation \eqref{eq1} to obtain 

\begin{align*}
\int_{D_2}\absa{f(\bm{y})\Phi_N(\bm{x},\bm{y})}\rmd\bm{y} &\leq C\int_{D_2}\absa{f(\bm{y})}\left(\absb{\bm{x}}^{-d}+\sum_{k=0}^{N-1}\absb{\bm{y}}^k\absb{\bm{x}}^{-d-k}\right)\rmd\bm{y} \\
&\leq C\absb{\bm{x}}^{-d}\int_{D_2}\absa{f(\bm{y})}\left(\sum_{k=0}^{N-1}\frac{\absb{\bm{y}}^k}{\absb{\bm{x}}^k}\right)\rmd\bm{y},
\end{align*}
and note that for $k\leq N-1$, we have $N-(\epsilon+\delta)-k>0$, since $0\leq \epsilon+\delta <1$. Additionally, $2\absb{\bm{y}} \geq \absb{\bm{x}}$ in this region, so we multiply each term of the sum by $(2\absb{\bm{y}}/\absb{\bm{x}})^{N-(\epsilon+\delta)-k}$ to obtain the required bound:

\begin{align*}
\int_{D_2}\absa{f(\bm{y})\Phi_N(\bm{x},\bm{y})}\rmd\bm{y} &\leq  C \absb{\bm{x}}^{-d}\int_{D_2}\absa{f(\bm{y})} \left(\frac{\absb{\bm{y}}}{\absb{\bm{x}}}\right)^{N-\epsilon-\delta}\rmd\bm{y}\\
&\leq C\absb{\bm{x}}^{-d-N+\epsilon+\delta} \norm{\absb{\cdot}^{N-\epsilon-\delta}f(\cdot)}_1.
\end{align*}

Finally, the region $D_3$ contains the singularity of the Riesz kernel, and we further divide the integral:

\begin{align*}
\lim_{\epsilon' \rightarrow 0} \int_{D_3\backslash B(\bm{x},\epsilon') }f(\bm{y})\Phi_N(\bm{x},\bm{y})\rmd\bm{y} =& \lim_{\epsilon' \rightarrow 0} \int_{D_3\backslash B(\bm{x},\epsilon') }f(\bm{y})R(\bm{x}-\bm{y})\rmd\bm{y} \\
&- \int_{D_3}f(\bm{y})P_N(\bm{x},\bm{y})\rmd\bm{y},
\end{align*}
where $B(\bm{x},\epsilon')$ denotes the ball of radius $\epsilon'$ centered at $\bm{x}$.  The second part is bounded using techniques similar to those used in the region $D_2$:

\begin{align*}
\int_{D_3}\absa{f(\bm{y})P_N(\bm{x},\bm{y})}\rmd\bm{y} &\leq C\int_{D_3}\absa{f(\bm{y})}\sum_{k=0}^{N-1}\absb{\bm{y}}^{k}\absb{\bm{x}}^{-d-k}\rmd\bm{y} \\
&\leq C \int_{D_3}\absa{f(\bm{y})} \absb{\bm{x}}^{-d} \left(\frac{\absb{\bm{y}}}{\absb{\bm{x}}}\right)^{N-1}\rmd\bm{y} \\
&\leq C \absb{\bm{x}}^{-N-d+\epsilon+\delta} \norm{\absb{\cdot}^{N-\epsilon-\delta}f(\cdot)}_1.
\end{align*}
The first part is bounded using the mean value theorem along with the estimate for the decay of the derivatives of $f$:

\begin{align*}
\absa{\int_{D_3\backslash B(\bm{x},\epsilon')}f(\bm{y})R(\bm{x}-\bm{y})\rmd\bm{y}} &=  \absa{\int_{D_3\backslash B(\bm{x},\epsilon')}\frac{(f(\bm{x})-f(\bm{y}))}{\absb{\bm{x}-\bm{y}}}\frac{(x_i-y_i)}{\absb{\bm{x}-\bm{y}}^{d}}\rmd\bm{y}}\\
&\leq C \sup_{\bm{t} \in B(\bm{x},\absb{\bm{x}}/2)}\absb{\nabla f(\bm{t})} \int_{D_3}\absb{\bm{x}-\bm{y}}^{-d+1}\rmd\bm{y} \\
&\leq C (1+\absb{\bm{x}})^{-d-N+\epsilon}.
\end{align*}
As this bound is independent of $\epsilon'$, the result follows.
\end{proof}

As previously stated, the Riesz transform preserves both decay and vanishing moments, and both are vital for deriving decay estimates for the higher-order Riesz transforms.  The former property was the subject of the last theorem, and we now address the latter; i.e., we will show that for any $f\in L_N(\mathbf{R}^d)$, $\mathcal{R}_if$ has vanishing moments up to degree $N-1$.  Basically, the idea is to consider both the Fourier and spatial domain formulations concurrently.  To wit, vanishing moments can be interpreted both as the order of the zero of a function at the origin in the Fourier domain and as an integral.

\begin{proposition}\label{pr:van}
For $f \in L_N(\mathbf{R}^d)$ and $\absa{\bm{\beta}}<N$, we have 
\begin{equation*}
D^{\bm{\beta}} \widehat{f}(\bm{\omega}) = o(\absb{\bm{\omega}}^{N-1-\absa{\bm{\beta}}}) \text{\hspace*{1cm} as \hspace*{1cm}} \bm{\omega}\rightarrow \bm{0}.
\end{equation*}
\end{proposition}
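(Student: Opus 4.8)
The plan is to move to the Fourier side, where property (3) says that every moment of $f$ up to order $N-1$ vanishes, and to read off from this both that the low-order derivatives of $\widehat{f}$ vanish at $\bm{0}$ and, more quantitatively, that they decay at the advertised rate. Before anything else I would check that $\widehat{f}$ is genuinely $(N-1)$-times continuously differentiable. By property (1), for $\absa{\bm{\beta}}\leq N-1$ the weight $\absb{\bm{x}}^{\absa{\bm{\beta}}}\absa{f(\bm{x})}$ decays like $\absb{\bm{x}}^{\absa{\bm{\beta}}-d-N+\epsilon}$, whose exponent is below $-d$ because $\absa{\bm{\beta}}\leq N-1<N-\epsilon$; hence $\bm{x}^{\bm{\gamma}}f\in L^1$ for every $\absa{\bm{\gamma}}\leq N-1$, and differentiation under the integral sign is justified, giving
\[
D^{\bm{\beta}}\widehat{f}(\bm{\omega}) = (-j)^{\absa{\bm{\beta}}}\int \bm{x}^{\bm{\beta}} f(\bm{x})\, e^{-j\bm{\omega}\cdot\bm{x}}\,\rmd\bm{x}.
\]

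Setting $m=N-1-\absa{\bm{\beta}}\geq 0$, I would then expand $e^{-j\bm{\omega}\cdot\bm{x}}$ into its degree-$m$ Taylor polynomial at the origin plus a remainder $r_m(\bm{\omega},\bm{x})$. Every monomial of that polynomial, once multiplied by $\bm{x}^{\bm{\beta}}$, is of the form $\bm{x}^{\bm{\beta}+\bm{\gamma}}$ with $\absa{\bm{\beta}+\bm{\gamma}}\leq\absa{\bm{\beta}}+m=N-1<N$, so its integral against $f$ is killed by the vanishing-moment property (3). Only the remainder survives, leaving
\[
D^{\bm{\beta}}\widehat{f}(\bm{\omega}) = (-j)^{\absa{\bm{\beta}}}\int \bm{x}^{\bm{\beta}} f(\bm{x})\, r_m(\bm{\omega},\bm{x})\,\rmd\bm{x}.
\]

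The hard part is that the single natural remainder bound $\absa{r_m}\lesssim(\absb{\bm{\omega}}\absb{\bm{x}})^{m+1}$ forces the weight $\absb{\bm{x}}^{N}\absa{f}$, which by property (1) just fails to be integrable (its tail is $\absb{\bm{x}}^{-d+\epsilon}$). I would circumvent this by splitting the integral at $\absb{\bm{x}}=1/\absb{\bm{\omega}}$ and playing the two complementary estimates $\absa{r_m}\lesssim(\absb{\bm{\omega}}\absb{\bm{x}})^{m+1}$ and $\absa{r_m}\lesssim(\absb{\bm{\omega}}\absb{\bm{x}})^{m}$ against each other. On the inner region the first bound yields a contribution $\lesssim\absb{\bm{\omega}}^{m+1}\int_{\absb{\bm{x}}\leq 1/\absb{\bm{\omega}}}\absb{\bm{x}}^{N}\absa{f}\,\rmd\bm{x}\lesssim\absb{\bm{\omega}}^{m+1-\epsilon}$, while on the outer region the second bound gives $\lesssim\absb{\bm{\omega}}^{m}\int_{\absb{\bm{x}}>1/\absb{\bm{\omega}}}\absb{\bm{x}}^{N-1}\absa{f}\,\rmd\bm{x}\lesssim\absb{\bm{\omega}}^{m+1-\epsilon}$, the two radial integrals being elementary after inserting the decay exponent from property (1). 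Since $\epsilon<1$, both pieces are $o(\absb{\bm{\omega}}^{m})=o(\absb{\bm{\omega}}^{N-1-\absa{\bm{\beta}}})$, which is exactly the claim. The one point to state carefully is the sharp two-sided remainder inequality for the truncated exponential, namely $\absa{r_m}\leq\min\{\absa{\bm{\omega}\cdot\bm{x}}^{m+1}/(m+1)!,\,2\absa{\bm{\omega}\cdot\bm{x}}^{m}/m!\}$, since everything hinges on having both powers simultaneously available.
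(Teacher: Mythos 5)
Your proof is correct, but it follows a genuinely different route from the paper's. The paper works entirely on the frequency side: from $(\cdot)^{\bm{\gamma}}f\in L^1$ it gets $\widehat{f}\in C^{N-1}$ with $D^{\bm{\gamma}}\widehat{f}(\bm{0})=0$ for $\absa{\bm{\gamma}}<N$, and then applies Taylor's theorem with Lagrange remainder to $D^{\bm{\beta}}\widehat{f}$ at the origin, so that the only surviving term is $\sum_{\absa{\bm{\alpha}}=N-1-\absa{\bm{\beta}}}\frac{1}{\bm{\alpha}!}D^{\bm{\alpha}+\bm{\beta}}\widehat{f}(\bm{\xi})\,\bm{\omega}^{\bm{\alpha}}$; the little-$o$ then comes purely from continuity of the order-$(N-1)$ derivatives and their vanishing at $\bm{0}$, with no use of the quantitative decay exponent of $f$ beyond integrability of $\absb{\cdot}^{N-1}\absa{f}$. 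You instead expand the kernel $e^{-j\bm{\omega}\cdot\bm{x}}$ inside the spatial integral, kill the polynomial part with the vanishing moments, and control the remainder by the splitting at $\absb{\bm{x}}=1/\absb{\bm{\omega}}$ together with the two-sided bound $\absa{r_m}\leq\min\{\absa{\bm{\omega}\cdot\bm{x}}^{m+1}/(m+1)!,\,2\absa{\bm{\omega}\cdot\bm{x}}^{m}/m!\}$. Your argument is longer but buys a quantitative rate, essentially $O(\absb{\bm{\omega}}^{N-\absa{\bm{\beta}}-\epsilon})$, which is stronger than the stated $o(\absb{\bm{\omega}}^{N-1-\absa{\bm{\beta}}})$; the paper's argument is softer and shorter. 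One small caveat in your inner-region estimate: when $\epsilon=0$ the integral $\int_{\absb{\bm{x}}\leq 1/\absb{\bm{\omega}}}\absb{\bm{x}}^{N}\absa{f}$ grows like $\log(1/\absb{\bm{\omega}})$ rather than $\absb{\bm{\omega}}^{-\epsilon}$, so the bound $\absb{\bm{\omega}}^{m+1-\epsilon}$ should be read as $\absb{\bm{\omega}}^{m+1}\log(1/\absb{\bm{\omega}})$ in that case; this is still $o(\absb{\bm{\omega}}^{m})$, so the conclusion is unaffected.
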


\begin{proof}
The decay estimate on $f$ implies that $(\cdot)^{\bm{\beta}}f$ is in $L^1(\mathbf{R}^d)$ for $\absa{\beta} < N$,  so $\widehat{f}$ has continuous derivatives up to order $N-1$ and
\begin{equation*}
D^{\bm{\beta}} \widehat{f} = C[(\cdot)^{\bm{\beta}}f]^\wedge,
\end{equation*}
cf. \cite[Theorem 1.1.7]{Stein}. The right-hand side can be expressed as the integral 

\begin{equation*}
D^{\bm{\beta}} \widehat{f}(\bm{\omega}) = C\int \bm{x}^{\bm{\beta}} f(\bm{x}) e^{j \bm{x}\cdot \bm{\omega}}\rmd\bm{x},
\end{equation*}
and the vanishing moment condition on $f$ implies that $D^{\bm{\beta}}\widehat{f}(\bm{0})=0$ for $\absa{\bm{\beta}}<N$.  Therefore, Taylor's theorem implies that

\begin{equation*}
D^{\bm{\beta}} \widehat{f}(\bm{\omega}) = \sum_{\absa{\bm{\alpha}}= N-1-\absa{\bm{\beta}}} \frac{1}{\bm{\alpha} !} D^{\bm{\alpha}+\bm{\beta}} \widehat{f}(\bm{\xi}) \bm{\omega}^{\bm{\alpha}}
\end{equation*}
for some $\bm{\xi}$ on the line segment from $\bm{0}$ to $\bm{\omega}$.  The result now follows from the fact that $D^{\bm{\alpha}+\bm{\beta}}\widehat{f}(\bm{\xi})\rightarrow 0$ as $\bm{\xi}\rightarrow \bm{0}$.
\end{proof}

\begin{theorem}\label{th:mom}
Given any $f \in L_N(\mathbf{R}^d)$, $\mathcal{R}_if$ has vanishing moments up to degree $N-1$. 
\end{theorem}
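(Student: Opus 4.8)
The plan is to characterize the vanishing moments of $\mathcal{R}_if$ through the order of vanishing of $\widehat{\mathcal{R}_if}$ at the origin, exactly as was done for $f$ in Proposition \ref{pr:van}, but now reading the equivalence in both directions and combining the spatial and Fourier pictures. First I would invoke the decay estimate from Theorem \ref{th:decay}, namely $\absa{\mathcal{R}_if(\bm{x})}\leq C(1+\absb{\bm{x}})^{-d-N+\epsilon+\delta}$ with $\epsilon+\delta<1$. Since for $\absa{\bm{\beta}}\leq N-1$ the exponent $\absa{\bm{\beta}}-d-N+\epsilon+\delta$ is strictly below $-d$, this guarantees $(\cdot)^{\bm{\beta}}\mathcal{R}_if\in L^1(\mathbf{R}^d)$ for all such $\bm{\beta}$. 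Consequently, by \cite[Theorem 1.1.7]{Stein}, $\widehat{\mathcal{R}_if}$ has continuous derivatives up to order $N-1$ and
\begin{equation*}
D^{\bm{\beta}}\widehat{\mathcal{R}_if}(\bm{0}) = C\int \bm{x}^{\bm{\beta}}\mathcal{R}_if(\bm{x})\rmd\bm{x}, \qquad \absa{\bm{\beta}}<N,
\end{equation*}
so it suffices to show every such derivative vanishes at the origin.

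Next I would bring in the Fourier-side formula \eqref{fourier}, which gives $\widehat{\mathcal{R}_if}(\bm{\omega}) = -j(\omega_i/\absb{\bm{\omega}})\widehat{f}(\bm{\omega})$ and hence the pointwise bound $\absa{\widehat{\mathcal{R}_if}(\bm{\omega})}\leq\absa{\widehat{f}(\bm{\omega})}$, the multiplier having modulus at most one. Applying Proposition \ref{pr:van} with $\bm{\beta}=\bm{0}$ shows $\widehat{f}(\bm{\omega})=o(\absb{\bm{\omega}}^{N-1})$ as $\bm{\omega}\rightarrow\bm{0}$, and the bound transfers this order of vanishing to the transform, giving $\widehat{\mathcal{R}_if}(\bm{\omega})=o(\absb{\bm{\omega}}^{N-1})$.

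The last step reconciles the two facts. Because $\widehat{\mathcal{R}_if}$ is $C^{N-1}$ near the origin, its degree-$(N-1)$ Taylor polynomial $\sum_{\absa{\bm{\alpha}}\leq N-1}(\bm{\alpha}!)^{-1}D^{\bm{\alpha}}\widehat{\mathcal{R}_if}(\bm{0})\bm{\omega}^{\bm{\alpha}}$ approximates it to order $o(\absb{\bm{\omega}}^{N-1})$; subtracting, the Taylor polynomial is itself $o(\absb{\bm{\omega}}^{N-1})$. A polynomial of degree at most $N-1$ with this property must vanish identically, as one sees by restricting to rays $\bm{\omega}=t\bm{v}$ and comparing powers of $t$ (the lowest-degree homogeneous part would otherwise produce a term of size $\sim t^{m}$ with $m\leq N-1$, contradicting the $o(t^{N-1})$ bound). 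Hence $D^{\bm{\alpha}}\widehat{\mathcal{R}_if}(\bm{0})=0$ for all $\absa{\bm{\alpha}}\leq N-1$, which by the displayed identity is precisely the assertion that $\mathcal{R}_if$ has vanishing moments up to degree $N-1$.

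The main obstacle, and the reason one cannot simply differentiate the Fourier expression, is that the Riesz multiplier $\omega_i/\absb{\bm{\omega}}$ is not smooth at the origin, so the Taylor coefficients of $\widehat{\mathcal{R}_if}$ cannot be read off directly from \eqref{fourier}. The crux of the argument is that the required smoothness of $\widehat{\mathcal{R}_if}$ near $\bm{0}$ is supplied entirely by the spatial-decay side (Theorem \ref{th:decay}), whereas the order of vanishing comes from the Fourier side; the elementary polynomial-rigidity fact in the final step is exactly what lets these two independently obtained pieces of information be combined.
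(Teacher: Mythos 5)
Your proof is correct and follows essentially the same route as the paper: both compare the spatial-domain representation of $D^{\bm{\beta}}\widehat{\mathcal{R}_if}$ (justified by the decay from Theorem \ref{th:decay}) with the Fourier-multiplier formula \eqref{fourier} together with Proposition \ref{pr:van} to conclude $D^{\bm{\beta}}\widehat{\mathcal{R}_if}(\bm{0})=0$, hence that the moments vanish. The only difference is that your final Taylor-polynomial rigidity step makes explicit how the $o(\absb{\bm{\omega}}^{N-1})$ vanishing of $\widehat{\mathcal{R}_if}$ at the origin forces its Taylor coefficients to vanish despite the non-smoothness of the multiplier $\omega_i/\absb{\bm{\omega}}$, a detail the paper's proof leaves implicit.
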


\begin{proof}
The proof relies on considering two representations of $\widehat{\mathcal{R}_if}$.  First we have

\begin{equation*}
\widehat{\mathcal{R}_if}(\bm{\omega}) = C \int \mathcal{R}_if(\bm{x})e^{j\bm{x}\cdot\bm{\omega}}\rmd\bm{x},
\end{equation*}
so

\begin{equation*}
D^{\bm{\beta}}\widehat{\mathcal{R}_if}(\bm{\omega}) = C \int \bm{x}^{\bm{\beta}} \mathcal{R}_if(\bm{x})e^{j\bm{x}\cdot\bm{\omega}}\rmd\bm{x}.
\end{equation*}
We also know that

\begin{equation*}
\widehat{\mathcal{R}_if}(\bm{\omega})=\frac{\omega_i}{\absb{\bm{\omega}}}\widehat{f}(\bm{\omega}),
\end{equation*}
and hence

\begin{equation*}
D^{\bm{\beta}}\widehat{\mathcal{R}_if}(\bm{0})=0
\end{equation*}
for $\absa{\bm{\beta}}<N$ by Proposition \ref{pr:van}.  Putting the two together, we have

\begin{equation*}
0 = D^{\bm{\beta}}\widehat{\mathcal{R}_if}(\bm{0}) = C \int \bm{x}^{\bm{\beta}} \mathcal{R}_if(\bm{x})\rmd\bm{x}
\end{equation*}
\end{proof}

Let us conclude this section by considering some examples.  To see how our results can be applied, we will first examine the Hermitian wavelets in one dimension, which we recall are derivatives of the Gaussian,
\begin{equation*}
\psi_n(t) = C_n \frac{d^n}{dt^n} e^{-t^2/2},
\end{equation*}
where $C_n$ is a normalization constant.  Clearly, these wavelets are infinitely smooth and decay faster than any polynomial.  Therefore, we must look at their vanishing moments to determine the spaces $L_N$ in which they lie.  Being derivatives of the Gaussian, integration by parts shows that $\psi_n$ has $n$ vanishing moments, so $\psi_n\in L_n(\mathbf{R})$.  Applying the results of this section, we conclude that the Riesz transform of $\psi_n$, denoted by $\mathcal{R}\psi_n$, satisfies: $\absa{\mathcal{R}\psi_n (x)} \leq C(1+\absa{x})^{-n}$ and $\mathcal{R}\psi_n$ has $n$ vanishing moments.  Note that since we are only interested in the essential decay of the Riesz transform, we have bounded the term $\absa{x}^{\delta+\epsilon}$ by $\absa{x}$ in our estimate.  However, the more precise decay rate stated in Theorem \ref{th:decay} will be important for showing how decay is preserved for higher-order Riesz transforms. 

As a second example, consider the two dimensional Poisson kernel
\begin{equation*}
p(\bm{x};s) = \frac{s}{2\pi(s^2+\absb{\bm{x}}^2)^{3/2}},
\end{equation*}
which was studied by Felsberg and Larkin \cite{FelsbergLarkin1,FelsbergLarkin2}.  The components of the Riesz transform of this function are computed to be
\begin{equation*}
\mathcal{R}_ip(\bm{x};s) = C_d \frac{x_i}{(s^2+\absb{\bm{x}}^2)^{3/2}},
\end{equation*}
which have decay comparable to $\absb{\bm{x}}^{-2}$ in the direction $x_i$.  Here, we can again see that while the Poisson kernel is smooth, its Riesz transform has slower decay due to the fact that $p(\bm{x};s)$ has no vanishing moments.

\section{Higher-order Riesz transforms}

In the previous section, we showed that a component transform of a function in $L_N(\mathbf{R}^d)$ has a rate of decay similar to the original function and a
comparable number of vanishing moments.  With this in mind, we would like to extend these results to
higher-order Riesz transforms by applying the results for the single order transform in an iterated fashion. In order to accomplish this, we will need to impose conditions which are stronger than those
required to be in $L_N(\mathbf{R}^d)$.
To make this precise, we recall the definition of the Sobolev spaces on $\mathbf{R}^d$.  

\begin{definition}
Let $H^s(\mathbf{R}^d)$ denote the $L^2$ Sobolev space of order $s\geq 0$ on $\mathbf{R}^d$, i.e. 
\begin{equation*}
 H^s(\mathbf{R}^d)=\left\{f\in L^2(\mathbf{R}^d): \widehat{f}(\cdot)(1+\norm{\cdot}_2^2)^{s/2}\in L^2(\mathbf{R}^d)  \right\}.
\end{equation*}

\end{definition}

The additional properties that we required of the original wavelet are given in terms its derivatives.  In particular, the derivatives of the wavelet should also have decay and vanishing moments.
\begin{definition}
 For $N \in \mathbf{N}$ and $M \in \mathbf{N}$ we define the space $L_{N,M}(\mathbf{R}^d)$ to be the collection of $f:\mathbf{R}^d\rightarrow \mathbf{R}$ satisfying:
\begin{enumerate}
 \item $f\in H^s(\mathbf{R}^d)\cap C(\mathbf{R}^d)$ for some $s>M+d/2$
 \item There are constants $C>0$ and $0 \leq \epsilon <1$ such that for any $0 \leq \absa{\bm{\alpha}} \leq M$ we have
\begin{enumerate}
\item  $\absa{ D^{\bm{\alpha}} f(\bm{x})}\leq C (1+\absb{\bm{x}})^{-d-N-\absa{\bm{\alpha}}+\epsilon}$  for $0 \leq \absa{\bm{\alpha}} \leq M$
\item  $\int \bm{x}^{\bm{\beta}} D^{\bm{\alpha}} f(\bm{x})\rmd\bm{x}=0$ for $0 \leq \absa{\bm{\alpha}} \leq M-1$ and $\absa{\bm{\beta}} < N+\absa{\bm{\alpha}}$
\end{enumerate}
\end{enumerate}
\end{definition}

Notice that the first property in the definition implies $L_{N,M}(\mathbf{R}^d)$ is contained in $C^M(\mathbf{R}^d)$ by the Sobolev embedding theorem.  Our definition also guarantees that $L_{N,M}(\mathbf{R}^d)$ is a subset of $L_N(\mathbf{R}^d)$ for $M\geq 1$, and we will show that the component transforms map $L_{N,M}(\mathbf{R}^d)$ to $L_{N,M-1}(\mathbf{R}^d)$. Hence the iterated
 transforms of a function in $L_{N,M}(\mathbf{R}^d)$ will have a rate of decay comparable to that of the the single transform
 of a function in $L_N(\mathbf{R}^d)$.  The proof relies on the iterated application of Theorem \ref{th:decay}
 and Theorem \ref{th:mom};  however, we must first establish that the transforms commute with differentiation.  

\begin{lemma}\label{lm:commute}
  If $f\in L_{N,M}(\mathbf{R}^d)$ for some $M>1$, then $\mathcal{R}_iD^{\bm{\alpha}} f=D^{\bm{\alpha}} \mathcal{R}_if$ for any $0<\absa{\bm{\alpha}}\leq M-1$.
\end{lemma}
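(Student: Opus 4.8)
The plan is to pass to the Fourier domain, where both $\mathcal{R}_i$ and $D^{\bm{\alpha}}$ act as multiplication operators and therefore commute by inspection; the real work is to justify that the Fourier-domain identities hold for our particular functions and that they can be inverted back to the claimed pointwise identity. This is precisely where the Sobolev hypothesis built into $L_{N,M}(\mathbf{R}^d)$ will be used.

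First I would observe that both sides of the claimed identity are genuinely defined. For $0<\absa{\bm{\alpha}}\leq M-1$ the decay and vanishing-moment conditions in the definition of $L_{N,M}(\mathbf{R}^d)$ show that $D^{\bm{\alpha}}f$ lies in $L_{N+\absa{\bm{\alpha}}}(\mathbf{R}^d)\subset L_N(\mathbf{R}^d)$, so Proposition~\ref{pr:well_def} and Theorem~\ref{th:decay} guarantee that $\mathcal{R}_iD^{\bm{\alpha}}f$ is well-defined, continuous, and---because $N+\absa{\bm{\alpha}}\geq 2$---integrable. Moreover $f$ and each $D^{\bm{\alpha}}f$ lie in $L^1\cap L^2$, the $L^1$ membership coming from the decay estimate and the $L^2$ membership from $f\in H^s$. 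On $L^1\cap L^2$ the singular integral defining $\mathcal{R}_i$ agrees with the Fourier multiplier of \eqref{fourier}, so combining \eqref{fourier} with the differentiation rule $\widehat{D^{\bm{\alpha}}g}(\bm{\omega})=(-j\bm{\omega})^{\bm{\alpha}}\widehat{g}(\bm{\omega})$ yields
\[
\widehat{\mathcal{R}_iD^{\bm{\alpha}}f}(\bm{\omega})=-j\,\frac{\omega_i}{\absb{\bm{\omega}}}(-j\bm{\omega})^{\bm{\alpha}}\widehat{f}(\bm{\omega}).
\]

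Next I would compute the Fourier transform of the other side by differentiating $\mathcal{R}_if$ directly in the frequency variable. Here the Sobolev condition $s>M+d/2$ enters: a weighted Cauchy--Schwarz estimate shows $\bm{\omega}^{\bm{\alpha}}\widehat{f}\in L^1(\mathbf{R}^d)$ for $\absa{\bm{\alpha}}\leq M$, and since $\absa{\omega_i/\absb{\bm{\omega}}}\leq 1$ the same holds for $\bm{\omega}^{\bm{\alpha}}\widehat{\mathcal{R}_if}$. Consequently one may differentiate under the Fourier inversion integral for $\mathcal{R}_if$, which shows $\mathcal{R}_if\in C^{M-1}(\mathbf{R}^d)$ and
\[
\widehat{D^{\bm{\alpha}}\mathcal{R}_if}(\bm{\omega})=(-j\bm{\omega})^{\bm{\alpha}}\widehat{\mathcal{R}_if}(\bm{\omega})=(-j\bm{\omega})^{\bm{\alpha}}\Bigl(-j\,\frac{\omega_i}{\absb{\bm{\omega}}}\Bigr)\widehat{f}(\bm{\omega}).
\]
The two displayed transforms coincide, since scalar multiplication in the frequency domain is commutative. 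As both $\mathcal{R}_iD^{\bm{\alpha}}f$ and $D^{\bm{\alpha}}\mathcal{R}_if$ are continuous functions whose integrable Fourier transforms agree, injectivity of the Fourier transform forces them to be equal everywhere, which is the assertion.

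The main obstacle I anticipate is the step in the previous paragraph: passing the derivative $D^{\bm{\alpha}}$ through the principal-value integral that defines $\mathcal{R}_if$ cannot be done directly in the spatial domain because of the singularity of the Riesz kernel, and must instead be justified in the Fourier domain. The legitimacy of differentiating under the inversion integral is exactly what the hypothesis $f\in H^s$ with $s>M+d/2$ buys us, and it is the reason the class $L_{N,M}(\mathbf{R}^d)$ imposes Sobolev regularity beyond the decay and moment conditions of $L_N(\mathbf{R}^d)$. A secondary point to verify carefully is that \eqref{fourier} is legitimately applied to $D^{\bm{\alpha}}f$ rather than only to $f$; this is handled by the $L^1\cap L^2$ membership noted above.
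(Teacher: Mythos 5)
Your proof is correct and takes essentially the same route as the paper: both sides are compared through their Fourier multipliers, the Sobolev hypothesis $s>M+d/2$ supplies the regularity and integrability needed to turn the frequency-domain identity into a pointwise one, and uniqueness of the Fourier transform on continuous functions finishes the argument. The only difference is technical packaging: the paper phrases the final step distributionally (two continuous functions with the same distributional Fourier transform coincide, with continuity coming from Sobolev embedding of $H^s$), whereas you carry it out via classical $L^1$ inversion with explicit Cauchy--Schwarz estimates on $\bm{\omega}^{\bm{\alpha}}\widehat{f}$.
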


\begin{proof}
 First, since $f\in H^s$ for some $s>M+d/2$, the Sobolev embedding theorem implies $f\in C^{M}(\mathbf{R}^d)$.
 Additionally, 
 
\begin{equation*}
 \widehat{\mathcal{R}_if} = \frac{\omega_i}{\absb{\bm{\omega}}}\widehat{f},
\end{equation*}
so $\mathcal{R}_if$ is also in $H^s$ and consequently in $C^{M}(\mathbf{R}^d)$.  Therefore for any fixed
 $0<\absa{\bm{\alpha}} \leq M-1$, $ \mathcal{R}_iD^{\bm{\alpha}} f$ and $D^{\bm{\alpha}} \mathcal{R}_if$ are continuous functions.  Furthermore, they
 are equivalent as distributions since they have the same Fourier transform 
$\omega_i/\absb{\bm{\omega}}\bm{\omega}^{\bm{\alpha}}\widehat{f}$. Therefore they are equivalent as functions, \cite[Theorem, p.27]{Kanwal}.
\end{proof}

Combining this lemma with the results of the previous section, we can see how the defining properties of $L_{N,M}$ ensure decay of the higher-order Riesz transforms, as we explicitly show in the next few results.
\begin{lemma}
 If $f\in L_{N,M}(\mathbf{R}^d)$ for some $M>1$, then $\mathcal{R}_if\in L_{N,M-1}(\mathbf{R}^d)$.
\end{lemma}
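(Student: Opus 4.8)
The plan is to verify the three defining conditions of $L_{N,M-1}(\mathbf{R}^d)$ directly for $g = \mathcal{R}_if$. The organizing principle is that differentiation shifts a function \emph{up} the scale of classes: I will first check that $D^{\bm{\alpha}} f \in L_{N+\absa{\bm{\alpha}}}(\mathbf{R}^d)$ for every $0 \leq \absa{\bm{\alpha}} \leq M-1$, so that each derivative of $\mathcal{R}_if$ can be rewritten, via the commutation Lemma \ref{lm:commute}, as the single Riesz transform of a function in a suitable class $L_{N'}(\mathbf{R}^d)$. Theorems \ref{th:decay} and \ref{th:mom} then supply the decay and the vanishing moments, respectively. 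The case $\bm{\alpha}=\bm{0}$ never needs the commutation lemma, since $f \in L_{N,M} \subseteq L_N$ already and $D^{\bm{0}}\mathcal{R}_if = \mathcal{R}_if$.

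For the regularity condition, since $\widehat{\mathcal{R}_if} = (\omega_i/\absb{\bm{\omega}})\widehat{f}$ and the multiplier has modulus at most one, $\mathcal{R}_if$ inherits membership in $H^s$ with the same $s > M+d/2 > (M-1)+d/2$; the Sobolev embedding theorem then gives $\mathcal{R}_if \in C^{M} \subset C(\mathbf{R}^d)$, exactly as in the proof of Lemma \ref{lm:commute}. This settles the first defining property, with room to spare in the smoothness index.

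To establish $D^{\bm{\alpha}} f \in L_{N+\absa{\bm{\alpha}}}(\mathbf{R}^d)$, I read off the three requirements of that class from the defining estimates of $L_{N,M}$: the decay bound on $D^{\bm{\alpha}} f$ is property 2(a) with exponent $-d-(N+\absa{\bm{\alpha}})+\epsilon$; the required gradient bound follows by applying 2(a) to $D^{\bm{\gamma}+\bm{\alpha}} f$ with $\absa{\bm{\gamma}}=1$, which is legitimate because $\absa{\bm{\alpha}}+1 \leq M$; and the vanishing moments up to order $N+\absa{\bm{\alpha}}$ are precisely condition 2(b). With this in hand, Lemma \ref{lm:commute} gives $D^{\bm{\alpha}}\mathcal{R}_if = \mathcal{R}_iD^{\bm{\alpha}} f$ for $0 < \absa{\bm{\alpha}} \leq M-1$, and Theorem \ref{th:decay} applied to $D^{\bm{\alpha}} f \in L_{N+\absa{\bm{\alpha}}}$ yields $\absa{D^{\bm{\alpha}}\mathcal{R}_if(\bm{x})} \leq C(1+\absb{\bm{x}})^{-d-N-\absa{\bm{\alpha}}+\epsilon+\delta_{\bm{\alpha}}}$ for some $0<\delta_{\bm{\alpha}}<1-\epsilon$. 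Since there are only finitely many $\bm{\alpha}$ with $\absa{\bm{\alpha}} \leq M-1$, I set $\epsilon' = \max_{\bm{\alpha}}(\epsilon + \delta_{\bm{\alpha}}) < 1$ and enlarge the constant, producing the single exponent $\epsilon'$ demanded by property 2(a) of $L_{N,M-1}$. The vanishing moments follow the same template: for $0 \leq \absa{\bm{\alpha}} \leq M-2$, Theorem \ref{th:mom} applied to $D^{\bm{\alpha}} f \in L_{N+\absa{\bm{\alpha}}}$ shows that $\mathcal{R}_iD^{\bm{\alpha}} f = D^{\bm{\alpha}}\mathcal{R}_if$ has vanishing moments up to degree $N+\absa{\bm{\alpha}}-1$, which is exactly condition 2(b) for $L_{N,M-1}$.

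I expect the only genuine content to be the index-shifting observation $D^{\bm{\alpha}} f \in L_{N+\absa{\bm{\alpha}}}$: this is precisely what the definition of $L_{N,M}$ was engineered to guarantee, and once it is in place the result is a bookkeeping assembly of the single-transform theorems together with the commutation lemma. The one technical wrinkle is that each separate invocation of Theorem \ref{th:decay} may incur a different loss $\delta_{\bm{\alpha}}$ in the decay exponent; this is harmless because the relevant index set is finite, so a single admissible $\epsilon' < 1$ can be chosen to dominate all of them at once.
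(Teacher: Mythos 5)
Your proposal is correct and follows essentially the same route as the paper: establish that $D^{\bm{\alpha}} f \in L_{N+\absa{\bm{\alpha}}}(\mathbf{R}^d)$ directly from the defining properties of $L_{N,M}(\mathbf{R}^d)$, use Lemma \ref{lm:commute} to exchange $D^{\bm{\alpha}}$ with $\mathcal{R}_i$, and then apply Theorems \ref{th:decay} and \ref{th:mom} to each derivative. Your explicit treatment of the $H^s\cap C$ condition and of the uniform choice of the exponent over the finitely many multi-indices only spells out details the paper leaves implicit (the Sobolev argument appears inside the proof of Lemma \ref{lm:commute}).
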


\begin{proof}
By Lemma \ref{lm:commute}, it suffices to verify the second property in the definition of $L_{N,M-1}(\mathbf{R}^d)$.  To begin, fix a multi-index $\bm{\alpha}$ with $0\leq \absa{\bm{\alpha}} \leq M-1$.  Since $f\in L_{N,M}(\mathbf{R}^d)$, we know 

\begin{enumerate}
 \item $\absa{ D^{\bm{\alpha}} f(\bm{x})}\leq C (1+\absb{\bm{x}})^{-d-N-\absa{\bm{\alpha}}+\epsilon}$ 
 \item $\absa{ D^{\bm{\alpha}+\bm{\gamma}} f(\bm{x})}\leq C (1+\absb{\bm{x}})^{-d-N-\absa{\bm{\alpha}}-1+\epsilon}$ for $\absa{\bm{\gamma}}=1$
 \item $D^{\bm{\alpha}} f$ has vanishing moments up to degree $N-1+\absa{\bm{\alpha}}$.
\end{enumerate}
Theorem \ref{th:decay} then implies that 

\begin{equation*}
  D^{\bm{\alpha}} \mathcal{R}_if(\bm{x})=\mathcal{R}_i D^{\bm{\alpha}} f(\bm{x})\leq C (1+\absb{\bm{x}})^{-d-N-\absa{\bm{\alpha}}+\epsilon+\delta}
\end{equation*}
for some $0<\delta<1-\epsilon$.  Additionally, Theorem \ref{th:mom} implies that $D^{\bm{\alpha}} \mathcal{R}_if(\bm{x})$ has vanishing moments up to degree $N-1+\absa{\bm{\alpha}}$.
\end{proof}

\begin{theorem}\label{th:dec}
 If $f\in L_{N,M}(\mathbf{R}^d)$, then for some $0<\epsilon<1$ and any $1\leq \absa{\bm{\alpha}} \leq M$ we have 
$\mathcal{R}^{\bm{\alpha}} f(\bm{x})\leq C (1+\absb{\bm{x}})^{-d-N+\epsilon}$ and $\mathcal{R}^{\bm{\alpha}} f(\bm{x})$ has vanishing moments up to degree $N-1$.
\end{theorem}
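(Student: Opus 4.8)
The plan is to view $\mathcal{R}^{\bm{\alpha}}$ as a composition of $\absa{\bm{\alpha}}$ component transforms and to strip these off one at a time, using the preceding lemma (that $\mathcal{R}_i$ maps $L_{N,M}(\mathbf{R}^d)$ into $L_{N,M-1}(\mathbf{R}^d)$) to push the intermediate functions down the scale $L_{N,M}\supset L_{N,M-1}\supset\cdots$ until a single component transform remains, at which point Theorem \ref{th:decay} and Theorem \ref{th:mom} apply directly. Set $k=\absa{\bm{\alpha}}$. The normalization constant $\sqrt{\absa{\bm{\alpha}}!/\bm{\alpha}!}$ affects neither decay nor vanishing moments, so it is enough to treat $\mathcal{R}_{i_1}\cdots\mathcal{R}_{i_k}f$, where $i_1,\dots,i_k$ list the coordinates of $\bm{\alpha}$ with multiplicity; since the component transforms are Fourier multipliers they commute, and the order of removal is immaterial.

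First I would put $g:=\mathcal{R}_{i_2}\cdots\mathcal{R}_{i_k}f$ and show $g\in L_{N,M-k+1}(\mathbf{R}^d)$ by applying the preceding lemma $k-1$ times. The $j$-th application acts on a function in $L_{N,M-j+1}(\mathbf{R}^d)$ and is legitimate exactly when $M-j+1>1$; as $j$ runs over $1,\dots,k-1$ and $k\leq M$, we always have $M-j+1\geq 2$, so each step is valid and we land in $L_{N,M-k+1}(\mathbf{R}^d)$. Because $M-k+1\geq 1$, this space sits inside $L_N(\mathbf{R}^d)$, so $g\in L_N(\mathbf{R}^d)$ and the remaining transform $\mathcal{R}_{i_1}g$ is governed by Theorem \ref{th:decay}, which supplies a decay bound of the form $C(1+\absb{\bm{x}})^{-d-N+\epsilon}$, and by Theorem \ref{th:mom}, which supplies vanishing moments up to degree $N-1$. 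The case $k=1$ needs no lemma step at all, since then $g=f\in L_{N,M}(\mathbf{R}^d)\subset L_N(\mathbf{R}^d)$; it is the base of the same argument.

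The step demanding care, and the only one I expect to be delicate, is the bookkeeping of the decay exponent. Each passage $L_{N,M'}\to L_{N,M'-1}$ is driven by Theorem \ref{th:decay}, which worsens the exponent of $(1+\absb{\bm{x}})$ by some $\delta$ that may be chosen to be any positive number below $1-\epsilon$, with $\epsilon$ the value currently in force. Iterating $k\leq M$ times produces the exponent $-d-N+\epsilon_0+\delta_1+\cdots+\delta_k$, so I must ensure $\epsilon_0+\sum_j\delta_j<1$, both to remain inside the prescribed class at every stage and to finish with a legitimate $\epsilon<1$. Since there are at most $M$ steps and each $\delta_j$ is free to be small, one can fix them in advance, e.g. $\delta_j=(1-\epsilon_{j-1})/2$, which keeps every partial exponent strictly below $1$; the resulting $\epsilon:=\epsilon_0+\sum_j\delta_j$ then lies in $(0,1)$ and yields $\absa{\mathcal{R}^{\bm{\alpha}}f(\bm{x})}\leq C(1+\absb{\bm{x}})^{-d-N+\epsilon}$. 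The deeper difficulty, namely that iteration requires control of the \emph{derivatives} of each intermediate transform rather than of the functions alone, is not encountered here: it has already been resolved by Lemma \ref{lm:commute}, which lets one write $D^{\bm{\alpha}}\mathcal{R}_if=\mathcal{R}_iD^{\bm{\alpha}}f$ and invoke the single-transform theorems on $D^{\bm{\alpha}}f$, whose decay and vanishing moments come for free from membership in $L_{N,M}(\mathbf{R}^d)$. Once that asymmetry between a wavelet and its transform is absorbed into the scale $L_{N,M}$, the theorem is pure iteration.
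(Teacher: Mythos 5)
Your proposal is correct and follows exactly the route the paper intends: the paper leaves Theorem \ref{th:dec} without an explicit proof, treating it as the iterated application of the lemma that $\mathcal{R}_i$ maps $L_{N,M}(\mathbf{R}^d)$ into $L_{N,M-1}(\mathbf{R}^d)$, followed by a final appeal to Theorem \ref{th:decay} and Theorem \ref{th:mom}, which is precisely your argument. Your additional bookkeeping of the exponents $\epsilon_0+\delta_1+\cdots+\delta_k<1$ is a welcome explicit verification of a point the paper glosses over.
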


To illustrate this result, we consider a two-dimensional analog of the Hermitian wavelets: 
\begin{equation*}
\psi_n(\bm{x}) = C_{n} D^{\bm{\beta}} e^{-\absb{\bm{x}}^2/2},
\end{equation*}
where $\beta_1=\beta_2=2n$ for some positive integer $n$.  As before, the number of vanishing moments is the limiting factor when classifying $\psi_n$ in terms of the spaces $L_{N,M}$.  Considering their Fourier transforms, we can see that $\psi_n$ is in $L_{n,n}$.  Therefore, Theorem \ref{th:dec} implies that the $n$th-order Riesz transform of $\psi_n$, denoted by $\mathcal{R}^{n}\psi_n$, decays at least as fast as $(1+\absb{\bm{x}})^{-n}$.  Moreover, notice that if $\psi_n$ had infinitely many vanishing moments, this theorem would guarantee a rate of decay faster than any polynomial.  In fact, the Riesz transform preserves the collection of Schwartz class functions with infinitely many vanishing moments, as is stated formally in the following corollary.

\begin{corollary}\label{cor:sch}
 If $\psi$ is a Schwartz class function and all of its moments vanish, then $\mathcal{R}_i\psi$ satisfies
 the same properties.
\end{corollary}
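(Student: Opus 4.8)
The plan is to show that a Schwartz function $\psi$ all of whose moments vanish belongs to $L_{N,M}(\mathbf{R}^d)$ for \emph{every} pair of integers $N,M \geq 1$, and then to push the membership result for the component transform (the Lemma stating $\mathcal{R}_i\colon L_{N,M}\to L_{N,M-1}$) through these spaces with $N$ and $M$ unbounded. Once this is done, the defining inequalities of the spaces $L_{N,M}$, read off for arbitrarily large $N$ and $M$, recover exactly the Schwartz estimates and the vanishing of all moments of $\mathcal{R}_i\psi$. In this sense the corollary is nothing but the formal $N\to\infty$, $M\to\infty$ limit of Theorem \ref{th:dec}.

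First I would verify the defining conditions of $L_{N,M}(\mathbf{R}^d)$ for $\psi$. The Sobolev condition is immediate, since every Schwartz function lies in $H^s(\mathbf{R}^d)$ for all $s$. The decay condition $\absa{D^{\bm\alpha}\psi(\bm{x})}\leq C(1+\absb{\bm{x}})^{-d-N-\absa{\bm\alpha}+\epsilon}$ holds for any $N,M$ (indeed with $\epsilon=0$), because the derivatives of a Schwartz function decay faster than any polynomial. The only step requiring a short computation is the vanishing-moment condition $\int \bm{x}^{\bm\beta}D^{\bm\alpha}\psi(\bm{x})\,\rmd\bm{x}=0$: integrating by parts $\absa{\bm\alpha}$ times, with boundary terms vanishing by Schwartz decay, turns the integrand into a constant multiple of $\int D^{\bm\alpha}(\bm{x}^{\bm\beta})\,\psi(\bm{x})\,\rmd\bm{x}$, and since $D^{\bm\alpha}(\bm{x}^{\bm\beta})$ is a polynomial while all moments of $\psi$ vanish, this integral is zero. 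Crucially, no restriction on $\absa{\bm\beta}$ or $\absa{\bm\alpha}$ is needed, so the condition holds for all $N,M$, and hence $\psi\in L_{N,M}(\mathbf{R}^d)$ for every $N,M\geq 1$.

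Next, applying the Lemma $\mathcal{R}_i\colon L_{N,M}\to L_{N,M-1}$ with $M$ taken arbitrarily large, I conclude $\mathcal{R}_i\psi\in L_{N,M}(\mathbf{R}^d)$ for every $N\geq 1$ and $M\geq 1$. From this membership I extract the two assertions. The vanishing-moment clause, taken with $\bm\alpha=\bm{0}$, gives $\int \bm{x}^{\bm\beta}\mathcal{R}_i\psi(\bm{x})\,\rmd\bm{x}=0$ for all $\absa{\bm\beta}<N$; letting $N\to\infty$ shows every moment of $\mathcal{R}_i\psi$ vanishes. For the Schwartz property, the Sobolev clause gives $\mathcal{R}_i\psi\in C^M$ for every $M$, hence $\mathcal{R}_i\psi\in C^\infty$, while the decay clause gives $\absa{D^{\bm\alpha}\mathcal{R}_i\psi(\bm{x})}\leq C_{N,M}(1+\absb{\bm{x}})^{-d-N-\absa{\bm\alpha}+\epsilon}$ for each fixed $\bm\alpha$ and arbitrarily large $N$; since any prescribed polynomial decay rate is achieved for each derivative by choosing $N$ large, $\mathcal{R}_i\psi$ is Schwartz.

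I do not expect any single step to be a serious obstacle. The one point demanding care is ensuring that the defining conditions of $L_{N,M}$ hold \emph{simultaneously for all} $N$ and $M$ rather than for a fixed pair; in particular the vanishing-moment computation must be seen to be uniform in the indices, which is why the integration-by-parts argument, returning zero for every $\bm\alpha,\bm\beta$, is the linchpin. A purely Fourier-domain alternative is also available — writing $\widehat{\mathcal{R}_i\psi}(\bm\omega)=-j(\omega_i/\absb{\bm\omega})\widehat{\psi}(\bm\omega)$ and using that $\widehat\psi$ vanishes to infinite order at the origin to absorb the singularity of the multiplier — but verifying smoothness of this product at the origin requires estimating the homogeneous derivatives of $\omega_i/\absb{\bm\omega}$ by hand, so the route through $L_{N,M}$ is cleaner and reuses the machinery already in place.
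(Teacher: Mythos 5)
Your proposal is correct and follows essentially the route the paper intends: the paper gives no separate proof, presenting the corollary as the immediate consequence of Theorem \ref{th:dec} (equivalently, of the lemma $\mathcal{R}_i\colon L_{N,M}\to L_{N,M-1}$) once one observes that a Schwartz function with all moments vanishing lies in $L_{N,M}(\mathbf{R}^d)$ for every $N$ and $M$. Your verification of that membership (integration by parts for the moment conditions) and the extraction of the Schwartz seminorm bounds and vanishing moments for arbitrary $N,M$ are exactly the missing details, correctly supplied.
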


%-----------------------------------------------------------------------------------------------------------
\section{Application to Simoncelli wavelets}

As we have seen,  spatial decay of the Riesz transform is guaranteed by assuming smoothness, vanishing moments, and decay of the original function.    There are several examples of wavelets that are bandlimited and whose Fourier transform is zero in a neighborhood of the origin (cf. \cite{Unser3}).  The one property that many wavelet constructions lack is fast spatial decay.  This is a result of constructing wavelets in the Fourier domain using a non-smooth cut-off function.  One way to see this is to consider a Fourier integral representation of a wavelet $\psi$:
\begin{equation*}
\psi(x)=\frac{1}{(2\pi)^{1/2}} \int_{\mathbf{R}} \widehat{\psi}(\omega)e^{j \omega x}\rmd\omega.
\end{equation*}
If $\widehat{\psi}$ has $n$ continuous derivatives and compact support, then we can integrate by parts to get the decay estimate
\begin{equation*}
\absa{\psi(x)}=\frac{1}{(2\pi)^{1/2}} \frac{\norm{\widehat{\psi}^{(n)}}_1}{\absa{x}^n}
\end{equation*}
However, a discontinuity of a lower order derivative of $\widehat{\psi}$ can produce terms with slower decay.  This estimate also shows how the size of the derivatives of $\widehat{\psi}$ play a role in the decay of $\psi$.  To give a specific example and show how decay can be improved, we consider 
the Simoncelli wavelets \cite{Simoncelli1}, defined radially in the Fourier domain by 

\begin{equation*}
\widehat{\psi}(\omega) =
     \left\{
     \begin{array}{lr}
       \cos(\frac{\pi}{2}\log_2(\frac{2\omega}{\pi})), &  \omega \in (\frac{\pi}{4}, \pi  ]\\
       0, & \text{otherwise}
     \end{array}
   \right.
\end{equation*}
Note that this wavelet has vanishing moments of all degrees and finite Sobolev norm of all orders.  However, the lack of smoothness in the Fourier domain causes $\psi$ to have slow decay. With Theorem \ref{th:dec} as a guideline, we propose a modification to improve the decay of its higher Riesz transforms.

If we write $\widehat{\psi}$ as $(\chi_{[-1,1]}\cos(\frac{\pi}{2}\cdot))\circ \log_2(\frac{2}{\pi}\cdot)$, where $\circ$ denotes function composition, it is clear that we can create a smooth analog if we replace the indicator function $\chi$ by a smooth Meyer window.  In particular, let $\theta_\epsilon$ be a function satisfying:

\begin{enumerate}
\item $\theta_\epsilon$ is identically equal to $1$ on the interval $[-1+\epsilon,1-\epsilon]$
\item $\theta_\epsilon$ is  equal to $0$ outside $[-1-\epsilon,1+\epsilon]$
\item $\theta_\epsilon$ has at least $r$ continuous derivatives for some $r\in \mathbb N$
\item $\theta_\epsilon(x)^2+\theta_\epsilon(x-2)^2=1$ for $0\leq x \leq 2$.
\end{enumerate}
Now, if we merely replace the indicator function with $\theta_\epsilon$, we will have increased the support of of our wavelet in the Fourier domain and the basis it generates might no longer be a tight frame.  To compensate, we rescale so that support is maintained within $[-\pi,\pi]^d$. Let us denote the modified wavelet by $\psi'_\epsilon$, which in the Fourier domain is defined radially by 

\begin{equation*}
\left(\theta_\epsilon\cos\left(\frac{\pi}{2}\cdot\right)\right)\circ \log_2\left(\frac{2^{1+\epsilon}}{\pi}\cdot\right).
\end{equation*} 
The decay of this new function and its transforms can be determined by the results of the previous sections.

\begin{proposition}
The value of $r$ defining the smoothness of $\theta_\epsilon$ will determine the decay of $\psi_\epsilon'$, and the higher Riesz transforms of $\psi_\epsilon'$ will maintain a similar rate of decay.
\end{proposition}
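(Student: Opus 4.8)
The plan is to work entirely on the Fourier side, where $\psi_\epsilon'$ is explicitly given, and to extract the three structural properties that the results of the previous sections demand: compact support, $C^r$ regularity, and a zero of infinite order at the origin. First I would note that the radial profile $g(t)=\left(\theta_\epsilon\cos\left(\frac{\pi}{2}\cdot\right)\right)\circ\log_2\left(\frac{2^{1+\epsilon}}{\pi}t\right)$ is the composition of the $C^r$ function $\theta_\epsilon\cos\left(\frac{\pi}{2}\cdot\right)$ with the real-analytic reparametrization $t\mapsto\log_2\left(\frac{2^{1+\epsilon}}{\pi}t\right)$ on $t>0$, so $g\in C^r$; and because $\theta_\epsilon$ is supported in $[-1-\epsilon,1+\epsilon]$, the support of $g$ is a compact interval bounded away from $t=0$. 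Consequently $\widehat{\psi_\epsilon'}(\bm\omega)=g(\absb{\bm\omega})$ is a compactly supported $C^r$ function on $\mathbf{R}^d$ that vanishes identically near the origin, the radialization costing no smoothness there precisely because $g$ is zero in a neighborhood of $t=0$. Being bounded with compact support, $\widehat{\psi_\epsilon'}(\cdot)(1+\norm{\cdot}_2^2)^{s/2}$ lies in $L^2(\mathbf{R}^d)$ for every $s$, so $\psi_\epsilon'\in H^s(\mathbf{R}^d)$ for all $s\geq 0$.

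Next I would translate these Fourier-side facts into spatial decay and vanishing moments. The vanishing of $\widehat{\psi_\epsilon'}$ near the origin forces every moment of $\psi_\epsilon'$ to vanish, and the same holds for each derivative $D^{\bm\alpha}\psi_\epsilon'$, whose Fourier transform $(j\bm\omega)^{\bm\alpha}\widehat{\psi_\epsilon'}$ again vanishes near the origin. For the decay I would invoke the $d$-dimensional form of the integration-by-parts estimate quoted in the text: since $(j\bm\omega)^{\bm\alpha}\widehat{\psi_\epsilon'}$ is $C^r$ with compact support, integrating by parts $r$ times in a single coordinate and maximizing over coordinates yields $\absa{D^{\bm\alpha}\psi_\epsilon'(\bm x)}\leq C(1+\absb{\bm x})^{-r}$ for every $\bm\alpha$. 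The key observation is that multiplying by the smooth monomial $\bm\omega^{\bm\alpha}$ does not lower the regularity, so all derivatives inherit the single decay rate $r$ that is set by the smoothness of $\theta_\epsilon$.

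I would then assemble these estimates to place $\psi_\epsilon'$ in the correct space and apply Theorem \ref{th:dec}. Given a target transform order $k$, choose integers $N\geq 1$ and $M\geq k$ with $d+N+M-\epsilon\leq r$; then the uniform bound $(1+\absb{\bm x})^{-r}$ on the derivatives dominates the required rate $(1+\absb{\bm x})^{-d-N-\absa{\bm\alpha}+\epsilon}$ for all $\absa{\bm\alpha}\leq M$, and combined with the vanishing moments and the Sobolev membership this shows $\psi_\epsilon'\in L_{N,M}(\mathbf{R}^d)$. Theorem \ref{th:dec} then gives $\absa{\mathcal{R}^{\bm\alpha}\psi_\epsilon'(\bm x)}\leq C(1+\absb{\bm x})^{-d-N+\epsilon}$ together with $N-1$ vanishing moments for $1\leq\absa{\bm\alpha}\leq M$. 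Taking $N$ as large as the constraint permits, $N\approx r-d-M$, the order-$\absa{\bm\alpha}$ transform decays essentially like $(1+\absb{\bm x})^{-(r-M)}$; since $\psi_\epsilon'$ itself decays like $(1+\absb{\bm x})^{-r}$, both rates grow together with $r$ and differ only by the fixed number of orders consumed by the iteration, which is the precise sense in which the higher transforms maintain a similar decay.

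I expect the principal obstacle to be the rigorous verification that $\widehat{\psi_\epsilon'}$ is genuinely $C^r$ on all of $\mathbf{R}^d$: one must check that neither the logarithmic reparametrization nor the passage from the radial profile $g(t)$ to the Cartesian function $g(\absb{\bm\omega})$ degrades the regularity, and the clean resolution is that $g$ is supported away from $t=0$, so that $\widehat{\psi_\epsilon'}$ vanishes identically on a neighborhood of the origin where $\absb{\bm\omega}$ fails to be smooth. The remaining work is essentially bookkeeping, namely making explicit how the single smoothness index $r$ simultaneously caps the admissible transform order $M$ and fixes the resulting decay exponent $d+N-\epsilon$ through the constraint $d+N+M-\epsilon\leq r$.
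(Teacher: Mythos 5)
Your argument is correct and follows exactly the route the paper intends: the paper states this proposition without a separate proof, as an immediate consequence of the integration-by-parts decay estimate discussed at the start of Section 5 (Fourier-domain $C^r$ smoothness plus compact support away from the origin gives spatial decay of order $r$, infinitely many vanishing moments, and membership in every $H^s$) combined with Theorem \ref{th:dec}. Your write-up simply fills in these details, including the correct bookkeeping constraint of the form $d+N+M-\epsilon\leq r$ for placing $\psi_\epsilon'$ in $L_{N,M}(\mathbf{R}^d)$, so it matches the paper's approach.
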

\noindent
Here, we should point out that one of the nice properties of the Simoncelli wavelets is that they generate a tight frame, and importantly, our modification does not nullify this property.

\begin{proposition}
The wavelet basis constructed from the modified Simoncelli wavelet $\psi'_\epsilon$ is a tight frame.
\end{proposition}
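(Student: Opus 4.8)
The plan is to reduce the tight-frame property to the radial Littlewood--Paley (Calderón) identity and then verify that identity by direct computation. For a wavelet whose Fourier transform is bandlimited to $[-\pi,\pi]^d$, the standard characterization of tight wavelet frames shows that the only condition that must be checked is
\[
\sum_{j\in\mathbf{Z}} \absa{\widehat{\psi'_\epsilon}(2^j\bm{\omega})}^2 = \text{const} \qquad (\bm{\omega}\neq\bm{0});
\]
the bandlimitedness guarantees that the scaled copies $\widehat{\psi'_\epsilon}(2^j\cdot)$ do not alias against the integer translation lattice, so the off-diagonal terms of the frame operator vanish and tightness is governed entirely by this one sum. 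Accordingly, I would first record that the rescaling by $2^{1+\epsilon}$ was introduced precisely so that the top of the pass-band is mapped back to $\absb{\bm{\omega}}=\pi$, placing the support of $\widehat{\psi'_\epsilon}$ inside the ball of radius $\pi$ and hence inside $[-\pi,\pi]^d$. This is the step that secures the translation part of the frame condition, and it explains why widening the indicator into the Meyer window does not, after rescaling, destroy tightness.

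The core of the argument is the evaluation of the Calderón sum, which I would carry out in the logarithmic variable $v=\log_2(2^{1+\epsilon}\absb{\bm{\omega}}/\pi)$, under which the dyadic dilation $\bm{\omega}\mapsto 2^j\bm{\omega}$ becomes the unit shift $v\mapsto v+j$. Writing the radial profile as $\theta_\epsilon(v)\cos(\pi v/2)$ and using that $\cos^2(\pi v/2)$ has period $2$ (so that the even shifts contribute the factor $\cos^2(\pi v/2)$ and the odd shifts contribute $\sin^2(\pi v/2)$), the sum splits as
\begin{align*}
\sum_{j\in\mathbf{Z}} \absa{\widehat{\psi'_\epsilon}(2^j\bm{\omega})}^2
&= \cos^2\left(\frac{\pi v}{2}\right)\sum_{k\in\mathbf{Z}}\theta_\epsilon(v+2k)^2 \\
&\quad + \sin^2\left(\frac{\pi v}{2}\right)\sum_{k\in\mathbf{Z}}\theta_\epsilon(v+2k+1)^2.
\end{align*}
I would then show each inner sum equals $1$. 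The quadratic partition-of-unity property $\theta_\epsilon(x)^2+\theta_\epsilon(x-2)^2=1$ on $[0,2]$, combined with the support $[-1-\epsilon,1+\epsilon]$ of $\theta_\epsilon$ (so that at most two even-shifted copies overlap at any point when $\epsilon<1$), yields $\sum_{k}\theta_\epsilon(w+2k)^2=1$ for every $w$ by $2$-periodicity of that sum. Taking $w=v$ and $w=v+1$ collapses both inner sums to $1$, leaving $\cos^2(\pi v/2)+\sin^2(\pi v/2)=1$. Hence the Calderón sum is the constant $1$, exactly as for the unmodified Simoncelli wavelet, which confirms that the modification preserves tightness.

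The main obstacle is the exact cancellation in the last step: one must check that the particular normalization in the fourth defining property of $\theta_\epsilon$ is the one that makes the widened window compensate precisely for the $\cos^2$ factor, i.e. that the even- and odd-indexed integer translates of $\theta_\epsilon^2$ each form a partition of unity. The bookkeeping is elementary once the logarithmic substitution is in place, but it is the only point at which all four defining properties of $\theta_\epsilon$ are simultaneously used, and it is exactly where an incorrectly scaled or normalized window would silently break tightness. A minor secondary point is that the endpoints of the pass-band and the single frequency $\bm{\omega}=\bm{0}$ are irrelevant to the $L^2$ statement, since the Calderón identity already holds pointwise for every $\bm{\omega}\neq\bm{0}$.
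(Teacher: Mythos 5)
Your proposal is correct and follows essentially the same route as the paper: reduce tightness to the radial Calder\'on sum $\sum_j \absa{\widehat{\psi}'_\epsilon(2^j\bm{\omega})}^2=1$ (justified by the bandlimited support after the $2^{1+\epsilon}$ rescaling), pass to the logarithmic variable so dilations become unit shifts, and invoke the window identity $\theta_\epsilon(x)^2+\theta_\epsilon(x-2)^2=1$. The only difference is organizational: you split the shifts into even and odd and show each $2$-periodic sum $\sum_k\theta_\epsilon(\cdot+2k)^2$ is identically $1$, whereas the paper evaluates the same sum by a two-case analysis on $z\in[-\epsilon,\epsilon]$ and $z\in[\epsilon,1-\epsilon]$; both rest on the same support and partition-of-unity facts.
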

\begin{proof}
See Appendix \ref{ap:proof52}.
\end{proof}

\setlength{\tabcolsep}{10pt}
\setlength{\extrarowheight}{5pt}

\begin{table}
\caption{\textbf{Examples of finite smoothness Meyer windows:}  Replacing $G$ by $G_n$ in equation \eqref{eq:H} and using the resulting $H_{n,\epsilon}$ in equation \eqref{eq:th}, we obtain a $ \theta_{n,\epsilon}$ in $C^{n}(\mathbf{R})$.}
\label{tab:mey1}
\begin{center}
\begin{tabular}{c|l}
$n$  & $G_n(x)$ \\ \hline \hline
    3  &  $\frac{35\pi}{64} (-\frac{1}{7}x^7+\frac{3}{5}x^5-x^3+x+\frac{16}{35})$ \\
    4  &  $\frac{315\pi}{512} (\frac{1}{9}x^9-\frac{4}{7}x^7+\frac{6}{5}x^5-\frac{4}{3}x^3+x+\frac{128}{315})   $ \\ 
    5  &  $\frac{693\pi}{1024} (-\frac{1}{11}x^{11}+\frac{5}{9}x^9-\frac{10}{7}x^7+2x^5-\frac{5}{3}x^3+x+\frac{256}{693})   $
\end{tabular}
\end{center}
\end{table}

We conclude now with a discussion about the construction of the Meyer windows $\theta_\epsilon$. The starting point is a smooth, non-negative bump function $g$ that is supported in $[-1,1]$ and whose definite integral is $\pi/2$.  From this function, we define

\begin{equation*}
G(\omega):= \int_{-\infty}^\omega g(t)\rmd t
\end{equation*}
and 

\begin{equation}\label{eq:H}
H_\epsilon(\omega):= G\left(\frac{\omega+1}{\epsilon}\right)-\frac{\pi}{2} + G\left(\frac{\omega-1}{\epsilon}\right)
\end{equation}
The Meyer window can then be constructed as

\begin{equation}\label{eq:th}
\theta_\epsilon(\omega) =\cos( H_\epsilon(\omega)).
\end{equation}
Indeed, one can verify that for $0 \leq \omega \leq 2$ we have
\begin{align*}
\theta_\epsilon(\omega)^2+\theta_\epsilon(\omega-2)^2 &= \cos^2(H_\epsilon(\omega)) +\cos^2(H_\epsilon(\omega-2)) \\
&= \cos^2(H_\epsilon(\omega)) +\cos^2(H_\epsilon(\omega)-\pi/2) \\
&= \cos^2(H_\epsilon(\omega)) +\sin^2(H_\epsilon(\omega)) \\
&= 1
\end{align*}
In the case of finite smoothness, examples can be found by setting $g_n(x)=C(1-x^2)^n$ on $[-1,1]$ and $0$ otherwise.  For some particular examples, see Table \ref{tab:mey1}.  In fact, the functions in the table are also derived in \cite{Held}; however, the authors' construction is different from ours. Notice that each such $G_n$ has $n$ continuous derivatives, so the corresponding $\theta_{n,\epsilon}$ and $\widehat{\psi}'_{n,\epsilon}$ will also have $n$ continuous derivatives, which translates into decay of the wavelet in the spatial domain.

In order to construct an infinitely smooth $\theta$, we take another approach, as explained in \cite{Held}.  Instead of starting with a bump function, we directly define the function $G$ used in \eqref{eq:H}.  Specifically, a function $G$ with the required properties is given by 
\begin{equation*}
G(\omega)=\frac{\pi}{2}\frac{\lambda(\omega+1)}{\lambda(\omega+1)+\lambda(1-\omega)}
\end{equation*}
where 
\begin{equation*}
\lambda(\omega)=
\begin{cases}
e^{-\omega^{-2}}, & \omega>0 \\
0, &\text{otherwise},
\end{cases}
\end{equation*}
and the result of this construction is a Meyer window with infinite smoothness.

\section{Conclusion}
In this paper, we have derived decay estimates for higher-order Riesz transforms of wavelets, where the conditions are given in terms of smoothness, decay and vanishing moments.  In the case of a single Riesz transform, it was shown that the decay rate of the original function is essentially maintained by the component transforms, and this result is likely to be optimal.  These results instill confidence in recently proposed steerable wavelet constructions, as the Riesz transform plays a large role in the design. Additionally, they point on the importance of specifying primal wavelets with a large number of vanishing moments, a fast rate of decay, and a sufficient amount of smoothness.  In particular, our results indicate that having many vanishing moments is not sufficient to guarantee decay; a matched degree of smoothness in both the space and frequency domains is also required.  Optimal conditions for the decay estimates of the higher-order transforms remains an open question.
By using the single-order results in an iterated fashion, we may be imposing conditions that are stronger than necessary, and taking a more direct approach may provide improved results.  Given the importance of the Riesz transform in steerable wavelet constructions, we view this as a topic that warrants further investigation.

\appendix

\section{Proof of Proposition 5.2}\label{ap:proof52}
Due to the support of  $\widehat{\psi}'_\epsilon$, it suffices to verify 

\begin{equation*}
\sum_{k\in \mathbb Z}{\absa{\widehat{\psi}'_\epsilon(2^k\omega)}^2}=1
\end{equation*}
for $\omega>0$. Using the definition, we have

\begin{equation*}
\sum_{k\in \mathbb Z}{\absa{\widehat{\psi}'_\epsilon(2^k\omega)}^2} =\sum_{k\in \mathbb Z}\absa{ \left(\theta_\epsilon\cos\left(\frac{\pi}{2}\cdot\right)\right)\circ \log_2\left(\frac{2^{1+\epsilon}}{\pi} 2^k \omega\right)}^2, 
\end{equation*}
and setting $z=\log_2(2^{1+\epsilon}\omega/\pi)$ gives

\begin{equation*}
\sum_{k\in \mathbb Z}{\absa{\widehat{\psi}'_\epsilon(2^k\omega)}^2} = \sum_{k\in \mathbb Z}\absa{ \theta_\epsilon(z+k)\cos\left(\frac{\pi}{2}(z+k)\right)}^2.
\end{equation*}
This sum is evaluated by considering two cases that depend on $z$.  First, for $-\epsilon \leq z \leq \epsilon$, we have

\begin{align*}
\sum_{k\in \mathbb Z}{\absa{\widehat{\psi}'_\epsilon(2^k\omega)}^2} &=\sum_{k=-1}^1 \absa{ \theta_\epsilon(z+k)\cos\left(\frac{\pi}{2}(z+k)\right)}^2\\
&= \cos^2\left(\frac{\pi}{2}z\right)+\cos^2\left(\frac{\pi}{2}(z-1)\right)\theta_\epsilon(z-1)^2 \\
&\quad +\cos^2\left(\frac{\pi}{2}(z+1)\right)\theta_\epsilon(z+1)^2 \\
&= \sin^2\left(\frac{\pi}{2}(z-1)\right) \\
&\quad+ \left( \theta_\epsilon(z-1)^2  + \theta_\epsilon(z+1)^2 \right)\cos^2\left(\frac{\pi}{2}(z-1)\right) \\
&= 1.
\end{align*}
Next, for $\epsilon \leq z \leq 1-\epsilon$, we have

\begin{align*}
\sum_{k\in \mathbb Z}{\absa{\widehat{\psi}'_\epsilon(2^k\omega)}^2} &=\sum_{k=-1}^0 \absa{ \theta_\epsilon(z+k)\cos\left(\frac{\pi}{2}(z+k)\right)}^2\\
&= \cos^2\left(\frac{\pi}{2}(z-1)\right)+\cos^2\left(\frac{\pi}{2}z\right) \\
&= \sin^2\left(\frac{\pi}{2}z\right) + \cos^2\left(\frac{\pi}{2}z\right) \\
&= 1.
\end{align*}

\bibliographystyle{plain}

\end{document}